\numberwithin{equation}{section}
\newtheorem{Theorem}{Theorem}[section]
\newtheorem{Corollary}[Theorem]{Corollary}
\newtheorem{Lemma}[Theorem]{Lemma}
\newtheorem{Proposition}[Theorem]{Proposition}
{ \theoremstyle{definition}
\newtheorem{Definition}[Theorem]{Definition}
}
\def\t{\theta}
\begin{document}

\allowdisplaybreaks

\newcommand{\arXivNumber}{1605.09775}

\renewcommand{\PaperNumber}{103}

\FirstPageHeading

\ShortArticleName{Strictly Positive Def\/inite Kernels on a Product of Spheres II}

\ArticleName{Strictly Positive Def\/inite Kernels\\ on a Product of Spheres II}

\Author{Jean C.~GUELLA, Valdir A.~MENEGATTO and Ana P.~PERON}

\AuthorNameForHeading{J.C.~Guella, V.A.~Menegatto and A.P.~Peron}

\Address{Instituto de Ci\^encias Matem\'aticas e de Computa\c{c}\~ao, Universidade de S\~ao Paulo,\\
Caixa Postal 668, 13560-970, S\~ao Carlos - SP, Brazil}
\Email{\href{mailto:jeanguella@gmail.com}{jeanguella@gmail.com}, \href{menegatt@icmc.usp.br}{menegatt@icmc.usp.br}, \href{apperon@icmc.usp.br}{apperon@icmc.usp.br}}

\ArticleDates{Received June 01, 2016, in f\/inal form October 24, 2016; Published online October 28, 2016}

\Abstract{We present, among other things, a necessary and suf\/f\/icient condition for the strict positive def\/initeness of an isotropic and positive def\/inite kernel on the cartesian \mbox{product} of a circle and a higher dimensional sphere. The result complements similar results previously obtained for strict positive def\/initeness on a product of circles [\textit{Positivity}, to appear, arXiv:1505.01169] and on a product of high dimensional spheres [\textit{J.~Math. Anal. Appl.} \textbf{435} (2016), 286--301, arXiv:1505.03695].}

\Keywords{positive def\/inite kernels; strictly positive def\/initeness; isotropy; covariance functions; sphere; circle}

\Classification{33C50; 33C55; 42A16; 42A82; 42C10; 43A35}

\section{Introduction}

The theory of positive def\/inite and strictly positive def\/inite kernels on manifolds and groups can not be separated from the seminal paper of I.J.~Schoenberg~\cite{schoen} published in the f\/irst half of the past century. The major contribution in that paper refers to kernels of the form $(x,y) \in S^m \times S^m \to f(x \cdot y)$, in which $\cdot$ is the usual inner product of $\mathbb{R}^{m+1}$ and the ``generating function'' $f$ is real and continuous in $[-1,1]$. The kernel is positive def\/inite if, and only if, the generating function $f$ has a series representation in the form
\begin{gather*}
f(t)=\sum_{k=0}^{\infty}a_{k}^mP_{k}^m(t),\qquad t \in [-1,1],
\end{gather*}
where all the coef\/f\/icients $a_k^m$ are nonnegative, $P_k^m$ denotes the usual Gegenbauer polynomial of degree $k$ attached to the rational number $(m-1)/2$ and $\sum_{k}a_k^m P_k^m(1)<\infty$. The normalization for the Gegenbauer polynomials used here is
\begin{gather*}
P_n^m(1)=\left(\begin{matrix} n+m-2 \\ n \end{matrix} \right),\qquad n=0,1,\ldots.
\end{gather*}
Recall that the positive def\/initeness of a general real kernel $(x,y) \in X^2 \to K(x,y)$ on a nonempty set $X$, demands that $K(x,y)=K(y,x)$, $x,y\in X$, and the inequality
\begin{gather*} \sum_{\mu,\nu=1}^nc_\mu c_\nu K(x_\mu , x_\nu)\geq 0,
\end{gather*}
whenever $n$ is a positive integer, $x_1, x_2, \ldots, x_n$ are $n$ distinct points on~$X$ and $c_1, c_2, \ldots, c_n$ are real scalars. For complex kernels we do not need the symmetry assumption and we have to use complex scalars instead. Isotropy on $S^m$ refers to the fact that the variables~$x$ and~$y$ of~$S^m$ are tied to each other via the inner product of~$\mathbb{R}^{m+1}$.

Some f\/ifty years later, the very same positive def\/inite functions were found useful for solving scattered data interpolation problems on spheres. But that demanded {\em strictly} positive def\/inite functions, and thus a characterization of these functions was needed at the start. We recall that the strict positive def\/initeness of a general positive def\/inite kernel as in the above def\/inition requires that the previous inequalities be strict whenever at least one~$c_\mu$ is nonzero. In other words, the {\em interpolation matrices} $[K(x_\mu , x_\nu)]_{\mu,\nu=1}^n$ of~$K$ at each set $\{x_1, x_2, \ldots, x_n\}$ need to be positive def\/inite. The strict positive def\/initeness on spheres was an issue for some time until Schoenberg's result was complemented by a result of Debao Chen et al.\ in 2003~\cite{chen} and by Menegatto et al.~\cite{menegatto}. A real, continuous, isotropic and positive def\/inite kernel $(x,y) \in S^m \times S^m \to f(x \cdot y)$ is strictly positive def\/inite if, and only if, the following additional condition holds for the coef\/f\/icients in Schoenberg's series representation for the generating function~$f$:
\begin{itemize}\itemsep=0pt
\item[--] $(m=1)$ the set $\{k \in \mathbb{Z}\colon a_{|k|}^1>0\}$ intersects each full arithmetic progression in $\mathbb{Z}$;
\item[--] $(m\geq 2)$ the set $\{k \colon a_k^m >0\}$ contains inf\/initely many even and inf\/initely many odd integers.
\end{itemize}

As a matter of fact, positive def\/inite and strictly positive def\/inite functions and kernels on spheres play a fundamental role in several other applications. For instance, two recent papers authored by Beatson and Zu Castell~\cite{beat,beat1} provide new families of strictly positive def\/inite functions on spheres via the so-called half-step operators, a spherical analogue of Matheron's mont\'ee and descente operators on~$\mathbb{R}^{m+1}$. Additional applications are mentioned in~\cite{berg,gneiting}.

The spheres belong to a much larger class of metric spaces, that is, they are compact two-point homogeneous spaces. Positive def\/initeness on these spaces in the same sense we explained above was investigated by Gangolli~\cite{gangolli} while strict positive def\/initeness was completely characterized in~\cite{barbosa}.

In 2011, the paper~\cite{emonds} considered strictly positive def\/inite functions on compact abelian groups taking into account a paper on strict positive def\/initeness on~$S^1$ previously written by X.~Sun~\cite{xingping}. Among other things, the paper included abstract characterizations for strict positive def\/initeness on a torsion group and on a product of a f\/inite group and a torus.

In the past two years, the attention shifted all the way to positive def\/initeness on a product of spaces, the main motivation coming from problems involving random f\/ields on spaces across time. The f\/irst important reference we would like to mention along this line is~\cite{berg}, where the authors investigated positive def\/inite kernels on a product of the form $G\times S^m$, in which $G$ is an arbitrary locally compact group, keeping both the group structure of~$G$ and the isotropy of~$S^m$ in the setting. Let us denote by $e$ the neutral element of $G$, $*$ the operation of the group $G$ and by $u^{-1}$ the inverse of $u \in G$ with respect to $*$. The main contribution in~\cite{berg} states that a~continuous kernel of the form $((u,x), (v,y)) \in (G \times S^m)^2 \to f(u^{-1}*v, x\cdot y)$ is positive def\/inite if, and only if, the function~$f$ has a representation in the form
\begin{gather*}f(u,t)=\sum_{n=0}^{\infty}f_n^m(u) P_n^m(t), \qquad (u,t) \in G \times [-1,1],\end{gather*}
in which $\{f_n^m\}$ is a sequence of continuous functions on $G$ for which $\sum f_n^m(e)P_n^m(1)<\infty$, with uniform convergence of the series for $(u,t) \in G\times [-1,1]$. As a matter of fact, the functions~$f_n^m$ are positive def\/inite on~$G$ in the sense that the kernel $(u,v) \in G^2 \to f_n^m(u^{-1}*v)$ is positive def\/inite as previously def\/ined. The paper~\cite{berg} did not considered any strict positive def\/initeness issues. It is worth to mention that~\cite{berg} may be linked to~\cite{gneit} where the reader can f\/ind a possible reason for considering positive def\/initeness on a product of a locally compact abelian group with a classical space.

Simultaneously, positive def\/initeness and strict positive def\/initeness on a product of spheres was investigated in \cite{jean0,jean,jean1}. A~characterization for the isotropic and positive def\/inite kernels on $S^m \times S^M$ was deduced in~\cite{jean} and that agreed with the characterization mentioned in \cite{berg} and also with \cite[Chapter~4]{shapiro}. By the way, a real, continuous and isotropic kernel $((x,z), (y,w)) \in (S^m \times S^M)^2 \to f(x \cdot y, z \cdot w)$ is positive def\/inite if, and only if, the function $f$ has a double series representation in the form
\begin{gather}\label{requ}
f(t,s) = \sum_{k,l=0}^{\infty}a_{k,l}^{m,M}P_{k}^{m}(t)P_{l}^{M}(s), \qquad t,s \in [-1,1],
\end{gather}
in which $a_{k,l}^{m,M}\geq 0$, $k,l\in \mathbb{Z}_+$ and $\sum\limits_{k,l=0}^{\infty}a_{k,l}^{m,M}P_{k}^{m}(1)P_{l}^{M}(1)<\infty$. As before, we will call $f$ the generating function of the kernel.

One of the main theorems in \cite{jean0} reveals that, in the case in which $m,M \geq 2$, a positive def\/inite kernel as above is strictly positive def\/inite if, and only if, the set $\{(k,l)\colon a_{k,l}^{m,M}>0\}$ contains sequences from each one of the sets $2\mathbb{Z}_+ \times 2\mathbb{Z}_+$, $2\mathbb{Z}_+ \times (2\mathbb{Z}_++1)$, $(2\mathbb{Z}_++1) \times 2\mathbb{Z}_+$, and $(2\mathbb{Z}_++1) \times (2\mathbb{Z}_++1)$, all of them having both component sequences unbounded. The very same paper contains some other intriguing results related to strict positive def\/initeness, including a~notion of strict positive def\/initeness that holds in product spaces only. In the case $m=M=1$, the condition becomes this one~\cite{jean1}: the set $\{(k,l)\colon a_{|k|,|l|}^{1,1}>0\}$ intersects all the translations of each subgroup of $\mathbb{Z}^2$ having the form $\{(pa, qb)\colon q,p\in \mathbb{Z}\}$, $a,b>0$. Even with the completion of these papers, it became clear very soon that a similar characterization for the remaining case, that is, strict positive def\/initeness on~$S^1 \times S^m$, $m\geq 2$, would demand much more work, perhaps a mix of the techniques used in~\cite{jean0,jean1}.

This is the point where we explain what the contributions in this paper are. In the next section, we present two abstract results that describe how to obtain continuous, isotropic and strictly positive kernels on $S^1 \times S^m$ via the characterization for strict positive def\/initeness of continuous, isotropic and positive def\/inite kernels on either~$S^1$ or~$S^m$ separately. In Section~\ref{section3}, we present necessary and suf\/f\/icient conditions in order that a real, continuous, isotropic and positive def\/inite kernel on $S^1 \times S^m$, $m\geq 2$, be strictly positive def\/inite, thus f\/illing in the missing gap in the previous papers on the subject. In Section~\ref{section4}, we indicate how to obtain a~similar characterization after we replace the sphere $S^m$ with an arbitrary compact two-point homogeneous space.

\section{Abstract suf\/f\/icient conditions}\label{section2}

In this section, we describe two abstract suf\/f\/icient conditions for strict positive def\/initeness on $S^1 \times S^m$, both derived from strict positive def\/initeness on single spheres. The results show that transferring strict positive def\/initeness from the factors $S^1$ and $S^m$ to strict positive def\/initeness of the product $S^1 \times S^m$ is not so obvious as it seems.

Here and in the next sections, {\em we will assume all the generating functions of the kernels are real-valued continuous functions and that the dimension $m$ in $S^m$ is at least $2$}. But the reader is advised that the results hold true for complex kernels after some obvious modif\/ications.

The results to be presented here will depend upon a technical lemma that provides an alternative formulation for the strict positive def\/initeness of a continuous, isotropic and positive def\/inite kernel on $S^1 \times S^m$, to be described below. Let $(x_1,w_1),(x_2,w_2), \ldots,(x_n,w_n)$ be distinct points on $S^1 \times S^{m}$ and represent the components in $S^1$ in polar form:
\begin{gather*}
x_\mu=(\cos \t_\mu, \sin \t_\mu), \qquad \t_\mu \in [0,2\pi),\qquad \mu=1,2,\ldots,n.
\end{gather*}
Write $A$ to denote the interpolation matrix of $((x,z), (y,w)) \in (S^1 \times S^m)^2 \to f(x \cdot y, z \cdot w)$ at $\{(x_1,z_1),(x_2,z_2),\ldots, (x_n,z_n)\}$ and consider the quadratic form $c^t A c$, where $c^t$ indicates the transpose of $c$. If the kernel is positive def\/inite, then the addition theorem for spherical harmonics~\cite[p.~18]{muller} and the representation~(\ref{requ}) imply that
\begin{gather*}c^{t}Ac = \sum_{k,l=0}^\infty C(k,l,m) a_{k,l}^{1,m}\sum_{j=1}^{d(l,m)}\left|\sum_{\mu=1}^n c_\mu
e^{i k\t_\mu} Y_{l,j}^m(z_\mu)\right|^2,
\end{gather*}
in which $c=(c_1,c_2, \ldots,c_n)$, $\{Y_{l,j}^m\colon j=1,2,\ldots,d(l,m)\}$ is a basis for the space of all spherical harmonics of degree~$l$ in~$m+1$ variables and~$C(k,l,m)$ is a positive constant that depends upon~$k$,~$l$ and~$m$. The deduction of the equality above requires the formulas $P_0^1\equiv 1$ and
\begin{gather*}P_k^1(\cos \t)=\frac{2}{k} \cos k\t,\qquad t \in [0,2\pi),\qquad k=1,2,\ldots.\end{gather*}
The equality $c^tAc=0$ is equivalent to
\begin{gather*}
\sum_{\mu=1}^n c_\mu e^{i k\t_\mu} Y_{l,j}^m(z_\mu)=0, \qquad j=1,2,\ldots,d(l,m), \qquad (k,l)\in \big\{(k,l)\colon a_{k,l}^{1,m}>0\big\}.
\end{gather*}
If this last piece of information holds, we can invoke the addition theorem once again in order to see that
\begin{gather*}
\sum_{\mu=1}^n c_\mu e^{ik\t_\mu} P_l^m(z_\mu\cdot z)=0, \qquad (k,l)\in \big\{(k,l)\colon a_{k,l}^{1,m}>0\big\}, \qquad z\in S^m.
\end{gather*}
Finally, if the condition above holds, we can multiply the equality in it by $e^{-i\t}$ and split the equation once again via the addition theorem
to obtain
\begin{gather*}
\sum_{j=1}^{d(l,m)} \left[\sum_{\mu=1}^n c_\mu e^{i k(\t_\mu-\t)} Y_{l,j}^m(z_\mu)\right]Y_{l,j}^m(z)=0, \qquad (k,l)\in \big\{(k,l)\colon a_{k,l}^{1,m}>0\big\}, \qquad z \in S^m.
\end{gather*}
Since $\{Y_{l,j}^m \colon j=1,2,\ldots,d(l,m)\}$ is linearly independent, we reach
\begin{gather*}
\sum_{\mu=1}^n c_\mu Y_{l,j}^m(z_\mu)e^{i k(\t_\mu-\t)}=0,\qquad j=1,2,\ldots,d(l,m),\qquad (k,l)\in \big\{(k,l)\colon a_{k,l}^{1,m}>0\big\},
\end{gather*}
for $\t\in[0,2\pi)$. Multiplying the real part of the equality in the previous formula by $\cos k\t$ and integrating in $[0,2\pi]$ with respect to~$\t$, we obtain
\begin{gather*}
\sum_{\mu=1}^n c_\mu Y_{l,j}^m(z_\mu)\cos k\t_\mu = 0,\qquad j=1,2,\ldots,d(l,m),\qquad (k,l)\in \big\{(k,l)\colon a_{k,l}^{1,m}>0\big\}.
\end{gather*}
Similarly, it is easily seen that
\begin{gather*}
\sum_{\mu=1}^n c_\mu Y_{l,j}^m(z_\mu)\sin k\t_\mu=0,\qquad j=1,2,\ldots,d(l,m),\qquad (k,l)\in \big\{(k,l)\colon a_{k,l}^{1,m}>0\big\}.
\end{gather*}

The above computations justify the following lemma.

\begin{Lemma} \label{t-alternativa-spd}
Let $f$ be the generating function of an isotropic and positive definite kernel on~\mbox{$S^1 \times S^m$} and consider its series representation~\eqref{requ}. The following statements are equivalent:
\begin{enumerate}\itemsep=0pt
\item[$(i)$] the kernel $((x,z), (y,w)) \in (S^1 \times S^m)^2 \to f(x \cdot y, z \cdot w)$ is strictly positive definite;
\item[$(ii)$] if $n$ is a positive integer and $(x_1,z_1),(x_2,z_2), \ldots, (x_n,z_n)$ are distinct points on $S^1\times S^{m}$, then the only solution $c=(c_1,c_2, \ldots, c_n)$ of the system
\begin{gather*}
\sum_{\mu=1}^n c_\mu e^{ik\t_\mu} P_l^m(z_\mu\cdot z)=0,\qquad (k,l)\in \big\{(k,l)\colon a_{k,l}^{1,m}>0\big\}, \qquad z\in S^m,
\end{gather*}
is $c=0$.
\end{enumerate}
\end{Lemma}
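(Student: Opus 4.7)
The strategy is to piece together the chain of equivalences already sketched in the discussion preceding the lemma. Strict positive definiteness asserts that, for any choice of distinct points $(x_\mu,z_\mu)$, the quadratic form $c^t A c$ is strictly positive for every nonzero $c$; equivalently, $c^t A c = 0$ forces $c = 0$. So the task reduces to identifying exactly when $c^t A c = 0$.

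First I would exploit the expansion
\[
c^{t}Ac = \sum_{k,l=0}^\infty C(k,l,m)\, a_{k,l}^{1,m} \sum_{j=1}^{d(l,m)} \left|\sum_{\mu=1}^n c_\mu e^{i k\t_\mu} Y_{l,j}^m(z_\mu)\right|^2
\]
together with the positivity of $C(k,l,m)$ and the nonnegativity of the coefficients $a_{k,l}^{1,m}$, to conclude that $c^t A c = 0$ if and only if
\[
\sum_{\mu=1}^n c_\mu e^{ik\t_\mu} Y_{l,j}^m(z_\mu) = 0, \qquad j=1,\ldots,d(l,m),\qquad (k,l)\in \big\{(k,l)\colon a_{k,l}^{1,m}>0\big\}.
\]

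Next I would show that this $Y$-system is equivalent to the $P_l^m$-system appearing in statement (ii). The forward direction is immediate: multiplying the $Y$-equation by $\overline{Y_{l,j}^m(z)}$, summing over $j$, and invoking the addition theorem collapses the result into $\sum_\mu c_\mu e^{ik\t_\mu} P_l^m(z_\mu\cdot z) = 0$ for every $z \in S^m$. For the converse, I would use the addition theorem in the opposite direction to rewrite the (ii)-equation as a linear combination of $\overline{Y_{l,j}^m(z)}$, valid for all $z \in S^m$; the linear independence of the spherical harmonic basis of fixed degree $l$ then forces each bracketed coefficient to vanish, recovering the $Y$-system. Combining the two equivalences yields (i) $\Leftrightarrow$ (ii).

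The main obstacle is less analytical than organizational: one must track that each step preserves the index set $\{(k,l)\colon a_{k,l}^{1,m}>0\}$ and that $C(k,l,m)>0$ so the nonnegativity step is legitimate. The text sketches a slightly longer route for the converse—multiplying the (ii)-equation by $e^{-ik\t}$, splitting again via the addition theorem, and projecting against $\cos k\t$ and $\sin k\t$ on $[0,2\pi]$ to separate out the exponential factor. Either approach works, but the direct linear-independence argument is cleaner, while the Fourier-projection route makes the role of the $e^{ik\t_\mu}$ visible at every step.
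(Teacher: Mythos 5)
Your proposal is correct and follows essentially the same route as the paper's sketch: the quadratic-form expansion of $c^tAc$, the reduction to the spherical-harmonic system $\sum_\mu c_\mu e^{ik\theta_\mu} Y_{l,j}^m(z_\mu)=0$, and passage to and from the Gegenbauer form via the addition theorem. Your converse step is a bit leaner than the paper's — once the addition theorem rewrites the $P_l^m$ equation as $\sum_j \bigl[\sum_\mu c_\mu e^{ik\theta_\mu} Y_{l,j}^m(z_\mu)\bigr] Y_{l,j}^m(z)=0$ for all $z$, linear independence of $\{Y_{l,j}^m\}_j$ directly recovers the harmonic system, so the paper's extra multiplication by $e^{-i\theta}$ and the Fourier projections against $\cos k\theta$ and $\sin k\theta$ are not needed — but the underlying mechanism is identical.
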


A particular case of the lemma is pertinent (see Theorem 2 in \cite{chen}).

\begin{Lemma} \label{spdm}
Let $g$ be the generating function of an isotropic and positive definite kernel on~$S^m$ and consider its series representation according to Schoenberg. The following statements are equivalent:
\begin{enumerate}\itemsep=0pt
\item[$(i)$] the kernel $(z,w) \in (S^m)^2 \to g(z \cdot w)$ is strictly positive definite;
\item[$(ii)$] if $n$ is a positive integer and $z_1,z_2, \ldots, z_n$ are distinct points on $S^{m}$, then the only solution $c=(c_1,c_2, \ldots, c_n)$ of the system
\begin{gather*}
\sum_{\mu=1}^n c_\mu P_k^m(z_\mu\cdot z)=0,\qquad k\in \big\{k\colon a_{k}^{m}>0\big\}, \qquad z\in S^m,
\end{gather*}
is $c=0$.
\end{enumerate}
\end{Lemma}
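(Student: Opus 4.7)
The strategy is to mirror, in the single--sphere setting, the chain of equivalences already performed for the product kernel just above, but truncated to the parts that do not involve the circle factor $S^1$. In other words, the proof should be a ``stripped--down'' version of the derivation that precedes Lemma~\ref{t-alternativa-spd}, so no new machinery beyond Schoenberg's expansion and the addition theorem for spherical harmonics is needed.

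First I would write out the Schoenberg expansion $g(t)=\sum_{k\geq 0}a_k^m P_k^m(t)$ and plug it into the quadratic form $c^{t}Bc$, where $B=[g(z_\mu\cdot z_\nu)]_{\mu,\nu=1}^n$ is the interpolation matrix of the kernel at $\{z_1,\dots,z_n\}$. Applying the addition theorem for spherical harmonics to each $P_k^m(z_\mu\cdot z_\nu)$ converts $c^tBc$ into
\begin{gather*}
c^{t}Bc=\sum_{k=0}^\infty C(k,m)\,a_k^m\sum_{j=1}^{d(k,m)}\left|\sum_{\mu=1}^n c_\mu Y_{k,j}^m(z_\mu)\right|^2,
\end{gather*}
with $C(k,m)>0$. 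Since every term is nonnegative, the kernel fails to be strictly positive definite at the chosen points precisely when there exists $c\neq 0$ for which all summands with $a_k^m>0$ vanish, that is,
\begin{gather*}
\sum_{\mu=1}^n c_\mu Y_{k,j}^m(z_\mu)=0,\qquad j=1,2,\ldots,d(k,m),\qquad k\in\{k\colon a_k^m>0\}.
\end{gather*}

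Next I would show that this spherical--harmonic system is equivalent to the integrated form appearing in $(ii)$. For the direction from harmonics to polynomials, multiply each of the above equalities by $Y_{k,j}^m(z)$, sum over $j$, and apply the addition theorem once more to recover $\sum_\mu c_\mu P_k^m(z_\mu\cdot z)=0$ for every $z\in S^m$. Conversely, if the latter holds, expand $P_k^m(z_\mu\cdot z)$ via the addition theorem and use the linear independence of $\{Y_{k,j}^m\colon j=1,\dots,d(k,m)\}$ to recover the scalar equations for each $j$. Putting the two chains together gives $(i)\Leftrightarrow(ii)$.

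There is really no significant obstacle here: the only subtlety worth double--checking is that the two uses of the addition theorem are genuinely inverse to each other, which is precisely the content of the linear independence of a fixed orthonormal basis of spherical harmonics of degree~$k$. Everything else is bookkeeping that is already carried out, verbatim with an extra $e^{ik\theta_\mu}$ factor, in the computation preceding Lemma~\ref{t-alternativa-spd}; in particular, the final ``Fourier in $\theta$'' step used there is exactly what is bypassed in the present single--sphere statement, making the proof strictly shorter.
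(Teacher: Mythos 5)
Your reconstruction is correct, and it is exactly the route the paper implicitly takes: the paper does not spell out a proof of this lemma but presents it as a particular case of the computation preceding Lemma~\ref{t-alternativa-spd} (and cites Theorem~2 of Chen--Menegatto--Sun, where this equivalence appears). Your ``stripped-down'' version, dropping the circle factor and the final Fourier-in-$\theta$ step, is precisely what one obtains by setting $k=0$ and $e^{ik\theta_\mu}\equiv 1$ throughout that derivation; the two invocations of the addition theorem together with linear independence of $\{Y_{k,j}^m\}$ are the whole content. No gaps.
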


Another consequence is this one (see Theorem 5.1 in \cite{ron}).

\begin{Lemma} \label{spd1}
Let $g$ be the generating function of an isotropic and positive definite kernel on~$S^1$ and consider its series representation according to Schoenberg. The following statements are equivalent:
\begin{enumerate}\itemsep=0pt
\item[$(i)$] the kernel $(x, y) \in (S^1)^2 \to g(x \cdot y)$ is strictly positive definite;
\item[$(ii)$] if $n$ is a positive integer and $x_1,x_2, \ldots, x_n$ are distinct points on $S^{1}$ given in polar form $x_\mu=(\cos \t_\mu, \sin \t_\mu)$, $\mu=1,2,\ldots, n$, then the only solution $c=(c_1,c_2, \ldots, c_n)$ of the system
\begin{gather*}
\sum_{\mu=1}^n c_\mu e^{ik\t_\mu}=0,\qquad k\in \big\{k\colon a_{k}^{1}>0\big\},
\end{gather*}
is $c=0$.
\end{enumerate}
\end{Lemma}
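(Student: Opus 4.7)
The strategy essentially parallels the derivation preceding Lemma~\ref{t-alternativa-spd}, but is considerably simpler because there is only one angular variable and no spherical-harmonic sum to split. The plan is to expand the quadratic form $c^{t}Ac$, where $A$ is the interpolation matrix of $(x,y)\in (S^1)^2\to g(x\cdot y)$ at an arbitrary configuration, as a weighted sum of nonnegative squares using the explicit formula for $P_k^1$, and then read off the equivalence directly.

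Concretely, I would fix distinct points $x_1,\ldots,x_n\in S^1$ with $x_\mu=(\cos\t_\mu,\sin\t_\mu)$ and real scalars $c_1,\ldots,c_n$. Substituting the Schoenberg representation of $g$ into $c^{t}Ac$, using $P_0^1\equiv 1$ and $P_k^1(\cos\t)=(2/k)\cos k\t$ for $k\geq 1$, and observing that for real $c_\mu$ the symmetry $\mu\leftrightarrow\nu$ gives $\sum_{\mu,\nu}c_\mu c_\nu \cos k(\t_\mu-\t_\nu) = \bigl|\sum_\mu c_\mu e^{ik\t_\mu}\bigr|^2$, one arrives at
\begin{gather*}
c^{t}Ac = a_0^1\left|\sum_{\mu=1}^{n}c_\mu\right|^2 + \sum_{k=1}^{\infty}\frac{2a_k^1}{k}\left|\sum_{\mu=1}^{n}c_\mu e^{ik\t_\mu}\right|^2.
\end{gather*}
Absolute convergence of the double sum that permits this rearrangement follows from $|P_k^1(t)|\leq P_k^1(1)$ together with $\sum_k a_k^1 P_k^1(1)<\infty$. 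Since every term is nonnegative, $c^{t}Ac=0$ if and only if $\sum_\mu c_\mu e^{ik\t_\mu}=0$ for every $k\in\mathbb{Z}_+$ with $a_k^1>0$, where the $k=0$ instance reduces to $\sum_\mu c_\mu=0$.

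From this identity both implications are immediate. For (ii)$\Rightarrow$(i), if the system admits only the trivial solution, then $c^{t}Ac=0$ forces $c=0$, so $A$ is positive definite at every configuration and the kernel is strictly positive definite. Conversely, a nontrivial $c$ solving the system would give $c^{t}Ac=0$ with $c\neq 0$, contradicting (i). I do not expect any serious obstacle here: the only minor bookkeeping concerns the $k=0$ term, and the observation that, because the $c_\mu$ are real, conjugating the equations automatically yields the analogous identity for $-k$, so it is harmless to index $k$ over $\mathbb{Z}_+$ rather than over all of $\mathbb{Z}$. The whole argument trivializes compared with Lemma~\ref{t-alternativa-spd} precisely because $S^1$ has a single (conjugate pair of) harmonic per frequency.
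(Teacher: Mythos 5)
Your proposal is correct and takes essentially the same approach as the paper: the paper presents Lemma~\ref{spd1} as a citation to Ron--Sun and as a consequence of the quadratic-form computation preceding Lemma~\ref{t-alternativa-spd}, and your argument is precisely that computation specialized to the single circle (drop the spherical-harmonic sum, keep the exponential), together with the observation that a nonnegative series vanishes iff every term does. The small bookkeeping points you flag (the $k=0$ term, the harmlessness of indexing over $\mathbb{Z}_+$ versus $\mathbb{Z}$ via conjugation for real $c$) are handled correctly, and the absolute-convergence justification via $|P_k^1(t)|\le P_k^1(1)$ and $\sum_k a_k^1 P_k^1(1)<\infty$ is exactly what is needed.
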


If $f$ generates an isotropic positive def\/inite kernel on $S^1 \times S^m$, we will adopt the following notation attached to its double series representation~(\ref{requ}):
\begin{gather*}J_f:=\big\{(k,l)\colon a_{k,l}^{1,m}>0\big\}.\end{gather*}
If $I$ is a subset of $\mathbb{Z}_+$, we will write $I \in {\rm SPD}(S^m)$ to indicate that there exists a continuous and positive def\/inite kernel $(z,w) \in (S^m)^2 \to g(z \cdot w)$ for which the set $\{l\colon a_l^m>0\}$ attached to the series representation for~$g$ in Schoenberg's result is precisely~$I$. This def\/inition is well posed once strict positive def\/initeness does not depend upon the actual values of the numbers $a_l^m$ in the series representation for the generating function but only on the set $\{l\colon a_l^m>0\}$ itself.

The f\/irst important contribution in this section is as follows.

\begin{Theorem}\label{ana1}
Let $f$ be the generating function of an isotropic and positive definite kernel on~\mbox{$S^1 \times S^m$} and consider its series representation~\eqref{requ}. If
\begin{gather*}
\big\{k \colon \{l \colon (k,l)\in J_f\}\in {\rm SPD}\big(S^m\big)\big\}\in {\rm SPD}\big(S^1\big),
\end{gather*}
then the kernel $((x,z), (y,w)) \in (S^1 \times S^m)^2 \to f(x \cdot y, z \cdot w)$ is strictly positive definite on $S^1\times S^m$.
\end{Theorem}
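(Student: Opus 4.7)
The plan is to invoke Lemma~\ref{t-alternativa-spd} and show that, at $n$ distinct points $(x_1,z_1),\ldots,(x_n,z_n)$ of $S^1\times S^m$, the only solution $c=(c_1,\ldots,c_n)$ of
\[
\sum_{\mu=1}^n c_\mu e^{ik\t_\mu} P_l^m(z_\mu\cdot z)=0, \qquad (k,l) \in J_f, \quad z \in S^m,
\]
is $c=0$. Write $L_k:=\{l\colon (k,l)\in J_f\}$ and $K:=\{k\colon L_k\in {\rm SPD}(S^m)\}$, so that the hypothesis reads $K\in {\rm SPD}(S^1)$.

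The strategy is to peel of\/f the two factors in sequence, which is possible because the system above has a tensor-type structure. Step one: f\/ix $k\in K$ and group the outer sum according to the distinct values $\zeta_1,\ldots,\zeta_p$ of the $S^m$-coordinates appearing among $z_1,\ldots,z_n$. Setting
\[
b_j^k := \sum_{\mu\colon z_\mu=\zeta_j} c_\mu e^{ik\t_\mu}, \qquad j=1,\ldots,p,
\]
the system rewrites as $\sum_{j=1}^p b_j^k P_l^m(\zeta_j\cdot z)=0$ for all $l\in L_k$ and $z\in S^m$. Since the $\zeta_j$ are distinct points of $S^m$ and $L_k \in {\rm SPD}(S^m)$, Lemma~\ref{spdm} forces $b_j^k=0$ for every $j$ and every $k\in K$.

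Step two: f\/ix $j\in\{1,\ldots,p\}$. Among the indices $\mu$ with $z_\mu=\zeta_j$, the corresponding $\t_\mu$'s must be pairwise distinct modulo $2\pi$, precisely because the product points $(x_\mu,z_\mu)$ were assumed distinct. The identities $b_j^k=0$ thus read $\sum_{\mu\colon z_\mu=\zeta_j} c_\mu e^{ik\t_\mu}=0$ for $k\in K$, and since $K\in {\rm SPD}(S^1)$, Lemma~\ref{spd1} yields $c_\mu=0$ for every such $\mu$. Letting $j$ range over $\{1,\ldots,p\}$ exhausts all indices $\mu$ and gives $c=0$.

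The only subtle point I foresee is that the $b_j^k$ produced in step one are complex, even when the original $c_\mu$ are real, so Lemmas~\ref{spdm} and~\ref{spd1} must be invoked for complex scalars. This is legitimate because the interpolation matrices involved are real symmetric, so positive def\/initeness over $\mathbb{R}$ and over $\mathbb{C}$ coincide; alternatively one can split $b_j^k$ into its real and imaginary parts and apply the real statements of the two lemmas twice.
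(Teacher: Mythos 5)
Your proposal is correct and takes essentially the same approach as the paper: invoke Lemma~\ref{t-alternativa-spd}, group terms by the distinct $S^m$-coordinates, apply Lemma~\ref{spdm} for each $k$ with $L_k\in{\rm SPD}(S^m)$, and then apply Lemma~\ref{spd1} using $K\in{\rm SPD}(S^1)$ within each group. Your remark about complex scalars is a sensible clarification of a point the paper leaves implicit.
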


\begin{proof} We will show that, under the assumption \begin{gather*}
\big\{k \colon \{l \colon (k,l)\in J_f\}\in {\rm SPD}\big(S^m\big)\big\}\in {\rm SPD}\big(S^1\big),\end{gather*} the alternative condition in Lemma~\ref{t-alternativa-spd} holds. In particular, the notation employed in that lemma will be adopted here. Let $(x_1,z_1),(x_2,z_2), \ldots,$ $(x_n,z_n)$ be distinct points in $S^1\times S^{m}$ and suppose that
\begin{gather*}
\sum_{\mu=1}^n c_\mu e^{ik\t_\mu} P_l^m(z_\mu\cdot z)=0, \qquad (k,l)\in J_f, \qquad z\in S^m.
\end{gather*}
Let $M$ be a maximal subset of $\{1,2,\ldots,n\}$ that indexes the distinct elements among the $z_j$. Writing $M_j:=\{\mu\colon z_\mu=z_j\}$, $j \in M$, the previous equality becomes
\begin{gather*}
\sum_{j\in M} \left(\sum_{\mu\in M_j} c_\mu e^{ik\t_\mu} \right) P_l^m(z_j\cdot z)=0, \qquad (k,l)\in J_f, \qquad z\in S^m.
\end{gather*}In particular,
\begin{gather*}
\sum_{j\in M} \left(\sum_{\mu\in M_j} c_\mu e^{ik\t_\mu} \right) P_l^m(z_j\cdot z)=0, \qquad l\in \{l\colon (k,l)\in J_f\}, \qquad z\in S^m,
\end{gather*}
whenever $k\in\{k\colon \{l\colon (k,l)\in J_f\} \in {\rm SPD}(S^m)\}$. Since the $z_j$ in the expression above are all distinct, Lemma~\ref{spdm} yields that
\begin{gather*}
\sum_{\mu\in M_j} c_\mu e^{ik\t_\mu}=0, \qquad k \in \big\{k\colon \{l\colon (k,l)\in J_f\} \in {\rm SPD}\big(S^m\big)\big\},
\end{gather*}
for every $j\in M$. An application of Lemma~\ref{spd1} for each $j$ plus the help of our original assumption leads to~$c_\mu=0$, $\mu\in M_j$, $j\in M$. But this corresponds to $c=0$.
\end{proof}

In a similar manner, but with slightly easier arguments, the following cousin theorem can be proved.

\begin{Theorem}
Let $f$ be the generating function of an isotropic and positive definite kernel on~\mbox{$S^1 \times S^m$} and consider its series representation~\eqref{requ}. If
\begin{gather*}\big\{l\colon \{k\colon (k,l)\in J_f\} \in {\rm SPD}\big(S^1\big)\big\} \in {\rm SPD}\big(S^m\big),\end{gather*}
then the kernel $((x,z), (y,w)) \in (S^1 \times S^m)^2 \to f(x \cdot y, z \cdot w)$ is strictly positive definite on~$S^1\times S^m$.
\end{Theorem}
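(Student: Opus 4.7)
The plan is to mirror the argument of Theorem~\ref{ana1}, but with the roles of the two spheres interchanged: group by the distinct $S^1$-coordinates first, apply Lemma~\ref{spd1} to peel off the angular factor, then close with Lemma~\ref{spdm}. As usual, the starting point is the alternative characterization in Lemma~\ref{t-alternativa-spd}. Assume $(x_1,z_1),(x_2,z_2),\ldots,(x_n,z_n)$ are distinct points of $S^1\times S^m$ and that $c=(c_1,c_2,\ldots,c_n)$ satisfies
\begin{gather*}
\sum_{\mu=1}^{n} c_\mu e^{ik\t_\mu}P_l^m(z_\mu\cdot z)=0,\qquad (k,l)\in J_f,\qquad z\in S^m.
\end{gather*}

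Let $N$ be a maximal subset of $\{1,2,\ldots,n\}$ that indexes the distinct values among the $\t_\mu$ and, for $i\in N$, set $N_i:=\{\mu\colon \t_\mu=\t_i\}$. Rewriting by this grouping gives
\begin{gather*}
\sum_{i\in N} e^{ik\t_i}\left(\sum_{\mu\in N_i} c_\mu P_l^m(z_\mu\cdot z)\right)=0,\qquad (k,l)\in J_f,\qquad z\in S^m.
\end{gather*}
Set $L:=\big\{l\colon\{k\colon (k,l)\in J_f\}\in {\rm SPD}(S^1)\big\}$. For any fixed $l\in L$ and any fixed $z\in S^m$, the set of indices $k$ in the equation above is precisely $\{k\colon (k,l)\in J_f\}$, which by the definition of $L$ belongs to ${\rm SPD}(S^1)$. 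Since the $\t_i$ are pairwise distinct, Lemma~\ref{spd1} then forces
\begin{gather*}
\sum_{\mu\in N_i} c_\mu P_l^m(z_\mu\cdot z)=0,\qquad i\in N,\qquad l\in L,\qquad z\in S^m.
\end{gather*}

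Now fix $i\in N$. Since all indices $\mu\in N_i$ share the same angular coordinate $\t_i$, the distinctness of the points $(x_\mu,z_\mu)$ compels the points $\{z_\mu\colon \mu\in N_i\}$ to be pairwise distinct on $S^m$. The hypothesis $L\in{\rm SPD}(S^m)$ then allows Lemma~\ref{spdm} to conclude that $c_\mu=0$ for every $\mu\in N_i$. Letting $i$ range over $N$ yields $c=0$, which by Lemma~\ref{t-alternativa-spd} proves strict positive definiteness.

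No genuine obstacle appears. The only point deserving a moment of care is the transfer of distinctness from the $(x_\mu,z_\mu)$ to the $z_\mu$ within each fibre $N_i$, needed to legitimately apply Lemma~\ref{spdm}. The simplification compared with Theorem~\ref{ana1} is that the angular factor $e^{ik\t_\mu}$ is free of the auxiliary variable $z\in S^m$, so the first lemma can be invoked pointwise in $z$ with no intermediate step involving the linear independence of spherical harmonics on the $S^m$-factor.
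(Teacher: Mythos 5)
Your proof is correct and is exactly the argument the paper has in mind when it states that this ``cousin theorem'' can be proved ``in a similar manner, but with slightly easier arguments'': you mirror the proof of Theorem~\ref{ana1} with the roles of the two spheres interchanged, grouping by repeated $S^1$-coordinates, applying Lemma~\ref{spd1} first and then closing with Lemma~\ref{spdm}. The easier aspect you note is real---since $P_l^m(z_\mu\cdot z)$ is real-valued, the grouped coefficients fed into Lemma~\ref{spd1} remain real, avoiding the real/imaginary-part splitting implicitly needed in Theorem~\ref{ana1} where the grouped coefficients $\sum_{\mu\in M_j}c_\mu e^{ik\theta_\mu}$ are complex.
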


We close this section presenting realizations for the previous theorems. They follow from the characterizations for strict positive def\/initeness on singles spheres described in the Introduction.

\begin{Corollary}
Let $f$ be the generating function for an isotropic and positive definite kernel on~\mbox{$S^1 \times S^m$} and consider its series representation~\eqref{requ}. Either condition below is sufficient for the kernel $((x,z), (y,w)) \in (S^1 \times S^m)^2 \to f(x \cdot y, z \cdot w)$ to be strictly positive definite:
\begin{enumerate}\itemsep=0pt
\item[$(i)$] $J_f$ contains sequences $\{(k_\mu, 2l_{\mu\nu})\colon \mu,\nu \in \mathbb{Z}_+\}$ and $\{(k'_\mu, 2l'_{\mu\nu}+1)\colon \mu,\nu \in \mathbb{Z}_+\}$ so that
\begin{gather*}\{\pm k_\mu\colon \mu\in \mathbb{Z}_+\} \cap (n\mathbb{Z}+j)\neq \varnothing, \qquad j=0,1,\ldots,n-1, \qquad n\geq 1,\\
\{\pm k'_\mu\colon \mu\in \mathbb{Z}_+\} \cap (n\mathbb{Z}+j)\neq \varnothing, \qquad j=0,1,\ldots,n-1, \qquad n\geq 1,\end{gather*}
and
\begin{gather*}\lim_{\nu \to \infty} l_{\mu\nu}=\lim_{\nu \to \infty} l'_{\mu\nu}=\infty, \qquad \mu\in \mathbb{Z}_+.\end{gather*}
\item[$(ii)$] $J_f$ contains sequences $\{(k_{\mu\nu}, 2l_{\mu})\colon \mu,\nu \in \mathbb{Z}_+\}$ and $\{(k'_{\mu\nu}, 2l'_{\mu}+1)\colon \mu,\nu \in \mathbb{Z}_+\}$ so that
\begin{gather*}\{\pm k_{\mu\nu}\colon \nu\in \mathbb{Z}_+\} \cap (n\mathbb{Z}+j)\neq \varnothing, \qquad j=0,1,\ldots,n-1,\qquad n\geq 1, \qquad \mu \in \mathbb{Z}_+,\\
\{\pm k'_{\mu\nu}\colon \nu\in \mathbb{Z}_+\} \cap (n\mathbb{Z}+j)\neq \varnothing, \qquad j=0,1,\ldots,n-1, \qquad n\geq 1, \qquad \mu \in \mathbb{Z}_+,\end{gather*}
and
\begin{gather*}\lim_{\mu \to \infty} l_{\mu}=\lim_{\mu \to \infty} l'_{\mu}=\infty. \end{gather*}
\end{enumerate}
\end{Corollary}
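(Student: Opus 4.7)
The plan is to derive each item by invoking one of the two abstract theorems of this section and verifying the set-theoretic hypothesis via the Chen--Menegatto--Sun characterization of strict positive definiteness on the individual spheres: for $m\ge 2$, $I\subseteq\mathbb{Z}_+$ lies in $\mathrm{SPD}(S^m)$ iff it contains infinitely many even and infinitely many odd integers, while $I\in \mathrm{SPD}(S^1)$ iff $I\cup(-I)$ meets every full arithmetic progression in $\mathbb{Z}$.

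For item $(ii)$, I would apply the second theorem above. Fixing $\mu$, the section $\{k\colon (k,2l_\mu)\in J_f\}$ contains $\{k_{\mu\nu}\colon \nu\in\mathbb{Z}_+\}$, whose union with its negatives meets every full arithmetic progression by hypothesis, and hence lies in $\mathrm{SPD}(S^1)$; the analogous statement holds for the odd level $2l'_\mu+1$. The set $\{l\colon \{k\colon(k,l)\in J_f\}\in\mathrm{SPD}(S^1)\}$ therefore contains $\{2l_\mu\colon \mu\in \mathbb{Z}_+\}\cup\{2l'_\mu+1\colon \mu\in \mathbb{Z}_+\}$, and since $l_\mu,l'_\mu\to\infty$ this union furnishes infinitely many even and infinitely many odd elements, placing it in $\mathrm{SPD}(S^m)$. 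The second theorem then delivers strict positive definiteness.

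For item $(i)$, I would apply Theorem \ref{ana1}. The first sequence gives, for each $\mu$, the unbounded even family $\{2l_{\mu\nu}\}_\nu$ inside the $l$-section at $k_\mu$, and the second sequence puts an unbounded odd family inside the $l$-section at $k'_\mu$. Reindexing the enumerations so that a common index $\mu$ produces the same $k$-value in both families (compatible with the hypotheses, which constrain only the underlying sets $\{\pm k_\mu\}$ and $\{\pm k'_\mu\}$), the $l$-section at each such $k_\mu$ receives infinitely many elements of both parities and therefore belongs to $\mathrm{SPD}(S^m)$; the resulting set of $k$'s lies in $\mathrm{SPD}(S^1)$ by hypothesis, and Theorem \ref{ana1} concludes the argument.

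The main obstacle I anticipate lies in item $(i)$: if the enumerations $\{k_\mu\}$ and $\{k'_\mu\}$ are genuinely incompatible in the sense that no single $k$-value in $J_f$ carries contributions of both parities in its $l$-section, then Theorem \ref{ana1} cannot be invoked verbatim. Circumventing this would require a refinement of Lemma \ref{spdm} that extracts information from infinite single-parity $l$-sections via antipodal identification on $S^m$, the projective-space analogue; one would then pair the even-parity constraints coming from the $k_\mu$-sequence with the odd-parity constraints coming from the $k'_\mu$-sequence and conclude through Lemma \ref{spd1} in the spirit of the proof of Theorem \ref{ana1}.
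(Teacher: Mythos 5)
Your handling of item $(ii)$ is correct and is the route the paper intends: for each $\mu$ the $k$-section at the level $2l_\mu$ (respectively $2l'_\mu+1$) contains the family $\{k_{\mu\nu}\}_\nu$ (respectively $\{k'_{\mu\nu}\}_\nu$), whose symmetrization meets every arithmetic progression and so lies in ${\rm SPD}(S^1)$; since $l_\mu,l'_\mu\to\infty$, the collection of such levels supplies infinitely many integers of each parity and therefore lies in ${\rm SPD}(S^m)$, and the second abstract theorem concludes.

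For item $(i)$ the gap you flag is genuine. The hypothesis places only an unbounded even subset in $J_f^{k_\mu}$ and only an unbounded odd subset in $J_f^{k'_\mu}$; nothing ties the two enumerations together, and the proposed ``reindexing'' is not a legitimate move because $\{k_\mu\}$ and $\{k'_\mu\}$ may well be disjoint, so Theorem~\ref{ana1} does not apply verbatim. The paper states the corollary without proof, merely calling it a realization of the abstract theorems via the single-sphere characterizations; item $(i)$ realizes Theorem~\ref{ana1} literally only if $k_\mu = k'_\mu$, which is very plausibly the intended reading. Read literally, the statement is nevertheless true, and the fallback you sketch is the right one; here is the missing step. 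Use Lemma~\ref{t-alternativa-spd}, group the indices by $z$-value as in the proof of Theorem~\ref{ana1}, and set $b_j^{(k)}=\sum_{\alpha\in M_j}c_\alpha e^{ik\theta_\alpha}$. Substituting $z=z_i$, dividing by $P_l^m(1)$, and letting $l\to\infty$ along $\{2l_{\mu\nu}\}_\nu$ (the Szeg\H{o} limit already used in the proof of Theorem~\ref{key}) gives $b_i^{(k_\mu)}=0$ when $-z_i$ is not among the $z_j$, and $b_i^{(k_\mu)}+b_{i'}^{(k_\mu)}=0$ when $z_{i'}=-z_i$, since even Gegenbauer polynomials do not separate antipodal points; the odd family $\{2l'_{\mu\nu}+1\}_\nu$ at $k'_\mu$ gives the analogous identities with a minus sign. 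Applying Lemma~\ref{spd1} once with $\{k_\mu\}$ and once with $\{k'_\mu\}$, then intersecting the resulting sum and difference constraints on each antipodal pair, kills every $c_\alpha$. No new refinement of Lemma~\ref{spdm} is actually required.
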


Finally, we would like to point that the previous theorems can be reproduced in other settings, with or without the presence of isotropy (for example $S^m \times S^m$ and the product of~$S^m$ and a~torus). Details will not be included here.

\section[Characterizations for strict positive def\/initeness on $S^1 \times S^m$]{Characterizations for strict positive def\/initeness on $\boldsymbol{S^1 \times S^m}$}\label{section3}

In order to obtain a characterization for the strict positive def\/initeness of an isotropic positive def\/inite kernel on~$S^1 \times S^m$, we need to look at the concept of strict positive def\/initeness in an enhanced form. We begin this section explaining what we mean by that and introducing the additional concepts needed.

It is an obvious matter to see that we can write the generating function $f$ of a positive def\/inite kernel $((x,z), (y,w)) \in (S^1 \times S^m)^2 \to f(x \cdot y, z \cdot w)$ in the form
\begin{gather} \label{alter}
f(t,s)=\sum_{l=0}^{\infty}f_l(t)P_{l}^{m}(s), \qquad t,s \in [-1,1],
\end{gather}
in which
\begin{gather*}f_l(t):=\sum_{k=0}^\infty a_{k,l}^{1,m}P_{k}^1(t), \qquad t \in [-1,1],\qquad l=0,1,\ldots.\end{gather*}
Since $P_l^m(1)\geq 1$, $m\geq 2$, $l=0,1,\ldots$, the series $\sum\limits_{k=0}^\infty a_{k,l}^{1,m}P_{k}^1(1)<\infty$ converges. In particular, each $f_l$ is the generating function of a continuous, isotropic and positive def\/inite kernel on $S^1$.

In the statement of the next lemma, we will employ the following additional notation related to another one we have previously introduced:
\begin{gather*}J_{f}^k:=\{l\colon (k,l) \in J_f \}.\end{gather*}
In particular,
\begin{gather*}\cup_{k} J_f^k=\{l\colon f_l\neq 0\}.\end{gather*}
Among other things, the lemma suggests how a characterization for the strict positive def\/ini\-te\-ness of an isotropic and positive def\/inite kernel on $S^1 \times S^m$ should look like.

\begin{Lemma} \label{prelim} Let $f$ be the generating function of an isotropic and positive definite kernel on~\mbox{$S^1 \times S^m$} and consider the alternative series representation~\eqref{alter} for~$f$. If $p$ is a positive integer, $x_1, x_2, \ldots, x_p$ are distinct points on~$S^1$ and~$c$ is a real vector in $\mathbb{R}^p$, then the continuous function $g$ given by
\begin{gather*}g(s)=\sum_{l \in \cup_k J_f^k} \big\{c^t [f_l(x_i \cdot x_j) ]_{i,j=1}^p c\big\} P_l^m(s), \qquad s \in [-1,1],\end{gather*}
generates an isotropic and positive definite kernel on~$S^m$. If~$c$ is nonzero and the kernel $((x,z), (y,w)) \in (S^1 \times S^m)^2 \to f(x \cdot y, z \cdot w)$ is strictly positive definite, then $(z,w)\in (S^m)^2 \to g(z\cdot w)$ is strictly positive definite as well.
\end{Lemma}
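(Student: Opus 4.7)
My plan is to read $g$ directly in Schoenberg form. Observe that $g(s) = \sum_l b_l P_l^m(s)$ with
\begin{gather*}
b_l := c^{t}[f_l(x_i \cdot x_j)]_{i,j=1}^p c, \qquad l \geq 0,
\end{gather*}
where $b_l = 0$ whenever $l \notin \cup_k J_f^k$, since $f_l \equiv 0$ for those indices. To conclude via Schoenberg's characterization that $g$ generates a continuous isotropic positive definite kernel on $S^m$, I would verify that $b_l \geq 0$ for every $l$ and that $\sum_l b_l P_l^m(1) < \infty$. Nonnegativity is immediate from the observation made in the paragraph just before the lemma: each $f_l$ generates a continuous isotropic positive definite kernel on $S^1$, hence the matrix $[f_l(x_i \cdot x_j)]_{i,j=1}^p$ is positive semidefinite. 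Summability follows from the standard bound $|f_l(t)| \leq f_l(1)$, $t \in [-1,1]$, valid for every positive definite generating function, which gives $0 \leq b_l \leq (|c_1|+\cdots+|c_p|)^2 f_l(1)$ and therefore $\sum_l b_l P_l^m(1) \leq (|c_1|+\cdots+|c_p|)^2 \sum_{k,l} a_{k,l}^{1,m} P_k^1(1) P_l^m(1) < \infty$ by the hypothesis on the representation (\ref{requ}).

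For the second assertion, my strategy is to realize the quadratic form for $g$ at an arbitrary configuration on $S^m$ as a quadratic form for $f$ at a suitably enlarged configuration on $S^1 \times S^m$. Fix distinct points $z_1, \ldots, z_n \in S^m$ and real scalars $d_1, \ldots, d_n$, not all zero. The $pn$ points $(x_i, z_\mu)$ are pairwise distinct in $S^1 \times S^m$ (distinctness in either coordinate is enough), so I may attach to $(x_i, z_\mu)$ the scalar $c_i d_\mu$ and form the associated quadratic form for $f$. Expanding $f$ by means of the representation (\ref{alter}) and regrouping, one obtains
\begin{gather*}
\sum_{i,j=1}^{p} \sum_{\mu,\nu=1}^{n} c_i c_j d_\mu d_\nu f(x_i \cdot x_j, z_\mu \cdot z_\nu) = \sum_l b_l \sum_{\mu,\nu=1}^{n} d_\mu d_\nu P_l^m(z_\mu \cdot z_\nu) = \sum_{\mu,\nu=1}^{n} d_\mu d_\nu g(z_\mu \cdot z_\nu).
\end{gather*}
Since $c \neq 0$ and some $d_\mu \neq 0$, the scalars $c_i d_\mu$ are not all zero; strict positive definiteness of the kernel generated by $f$ then forces the left-hand side, and therefore the right-hand side, to be strictly positive. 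This is precisely the strict positive definiteness of the kernel on $S^m$ generated by $g$.

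The only subtle point is rigorously justifying the interchange between the infinite sum in $l$ and the finite quadratic sums in the identity above. This rests on absolute convergence: the general term of the iterated series is dominated in modulus by $(\sum_i |c_i|)^2 (\sum_\mu |d_\mu|)^2 f_l(1) P_l^m(1)$, whose sum over $l$ is finite by the first paragraph. Hence Fubini-type rearrangements are legitimate and the argument goes through.
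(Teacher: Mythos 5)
Your proposal is correct and rests on the same central identity the paper uses: the quadratic form of $g$ at $z_1,\ldots,z_n$ equals the quadratic form of $f$ at the $pn$ distinct points $(x_i,z_\mu)$ with scalars $c_id_\mu$, and strict positive definiteness of $f$ forces this to be positive once $c\neq 0$ and some $d_\mu\neq 0$. The only (harmless) deviations are that you verify the first assertion by checking the Schoenberg coefficients $b_l\geq 0$ and their summability rather than by simply noting the quadratic form is nonnegative, and that you spell out the Fubini justification, which the paper leaves implicit.
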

\begin{proof} Write $c=(c_1,c_2, \ldots, c_p)$. If $z_1, z_2, \ldots, z_q$ are distinct points on $S^m$ and $d_1, d_2, \ldots, d_q$ are real numbers, then
\begin{gather*}\sum_{\mu,\nu=1}^q d_\mu d_\nu g(z_\mu \cdot z_\nu)=\sum_{\mu,\nu=1}^q \sum_{i,j=1}^p (d_\mu c_i) (d_\nu c_j) f(x_i \cdot x_j,z_\mu \cdot z_\nu).\end{gather*}
But, the last expression above corresponds to a quadratic form involving $f$ and the $pq$ distinct points $(x_i, z_\mu)$, $i=1,2,\ldots,p$, $\mu=1,2,\ldots,q$, of $S^1 \times S^m$. In particular,
\begin{gather*}\sum_{\mu,\nu=1}^q d_\mu d_\nu g(z_\mu \cdot z_\nu)\geq 0.\end{gather*}
If the real numbers $d_\mu$ are not all zero and $c\neq 0$, then at least one of the scalars $d_\mu c_i$ is likewise nonzero. Further, if $((x,z), (y,w)) \in (S^1 \times S^m)^2 \to f(x \cdot y, z \cdot w)$ is strictly positive def\/inite, then the quadratic form above is, in fact, positive. In particular, $(z,w) \in (S^m)^2 \to g(z\cdot w)$ is strictly positive def\/inite.
\end{proof}

Another useful technical result is as follows.

\begin{Lemma}\label{basic}
Let $f$ be the generating function of an isotropic and positive definite kernel on~\mbox{$S^1 \times S^m$} and consider the alternative series representation~\eqref{alter} for $f$. If $(x_1,z_1),(x_2,z_2),$ $ \ldots, (x_n,z_n)$ are distinct points in $S^1 \times S^m$ and $c_1, c_2, \ldots, c_n$ are real scalars, then the following assertions are equivalent:
\begin{enumerate}\itemsep=0pt
\item[$(i)$] $\sum\limits_{i,j=1}^n c_i c_j f(x_i \cdot x_j,z_i \cdot z_j)=0$;
\item[$(ii)$] $\sum\limits_{i,j=1}^n c_i c_j f_l(x_i \cdot x_j) P_l^m(z_i \cdot z_j)=0$, $l\in \cup_{k}J_f^k$.
\end{enumerate}
\end{Lemma}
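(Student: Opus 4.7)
The plan is to exploit the alternative series representation~\eqref{alter} of~$f$ to decompose the quadratic form in~(i) into a sum indexed by~$l$ whose individual terms turn out to be nonnegative. Once that is done, vanishing of the full sum will be equivalent to vanishing of every summand, which is condition~(ii).

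First I would substitute $f(t,s)=\sum_l f_l(t)P_l^m(s)$ into the quadratic form in~(i). The double series in~\eqref{requ} converges absolutely and uniformly on $[-1,1]^2$ because $\sum_{k,l}a_{k,l}^{1,m}P_k^1(1)P_l^m(1)<\infty$, and the same holds for the regrouped single series in~\eqref{alter}. This legitimises interchanging the finite double sum over $i,j$ with the series in $l$, producing
\begin{gather*}
\sum_{i,j=1}^n c_i c_j f(x_i\cdot x_j, z_i\cdot z_j)=\sum_{l=0}^{\infty}\sum_{i,j=1}^n c_i c_j f_l(x_i\cdot x_j) P_l^m(z_i\cdot z_j).
\end{gather*}

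The key observation is that each term in the outer series is nonnegative. Since $f_l$ generates a continuous isotropic positive definite kernel on $S^1$, the matrix $[f_l(x_i\cdot x_j)]_{i,j=1}^n$ is positive semidefinite. Likewise, by the addition theorem for spherical harmonics (as used in the derivation preceding Lemma~\ref{t-alternativa-spd}), the matrix $[P_l^m(z_i\cdot z_j)]_{i,j=1}^n$ is positive semidefinite. Their Hadamard product is therefore positive semidefinite by the Schur product theorem, whence
\begin{gather*}
\sum_{i,j=1}^n c_i c_j f_l(x_i\cdot x_j) P_l^m(z_i\cdot z_j)\geq 0, \qquad l=0,1,\ldots.
\end{gather*}

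Since a series of nonnegative reals vanishes if and only if each of its summands vanishes, statement~(i) is equivalent to the identity in~(ii) holding for every $l\in \mathbb{Z}_+$. To conclude, note that if $l\notin \cup_k J_f^k$, then $a_{k,l}^{1,m}=0$ for all $k$, so $f_l\equiv 0$ and the $l$-th summand vanishes automatically; only the indices $l\in \cup_k J_f^k$ carry information, which is exactly the set quantified over in~(ii). No serious obstacle is expected: the only point requiring mild care is the interchange of the finite double sum with the series, after which Schur product positivity does all the work.
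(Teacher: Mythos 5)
Your proof is correct and follows essentially the same route as the paper: decompose the quadratic form into a sum over $l$ of nonnegative terms, then observe that a vanishing sum of nonnegative reals forces each summand to vanish. The only cosmetic difference is that you justify the nonnegativity of each term via the Schur product theorem, whereas the paper invokes directly that $(t,s)\mapsto f_l(t)P_l^m(s)$ is itself the generating function of a positive definite kernel on $S^1\times S^m$; these are two phrasings of the same fact.
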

\begin{proof}
One direction is immediate while the other follows simply from the observation that $(t,s) \in [-1,1]^2 \to f_l(t)P_l^m(s)$ is the generating function of a positive def\/inite kernel \linebreak on~\mbox{$S^1 \times S^m$}.
\end{proof}

Next, we formalize the def\/inition of enhancement we use in this section.

\begin{Definition} Let $p$ and $q$ be positive integers, $\{x_1,x_2, \ldots, x_p\} \subset S^1$ and $\{z_1,z_2, \ldots, z_q\}$ an antipodal free subset of $S^m$, that is, a set containing no pairs of antipodal points. An {\em enhanced subset} of $S^1 \times S^m$ generated by them is the set
\begin{gather*}\big\{(x_1,z_1), (x_2,z_1), \ldots, (x_p,z_1), (x_1,z_2), (x_2,z_2), \ldots, (x_p,z_2),\ldots, \\
\qquad{} (x_1,z_q),(x_2,z_q),\ldots, (x_p,z_q), (x_1,-z_1), (x_2,-z_1), \ldots, (x_p,-z_1), \\
\qquad{} (x_1,-z_2), (x_2,-z_2), \ldots,(x_p,-z_2),\ldots, (x_1,-z_q), (x_2,-z_q), \ldots, (x_p,-z_q)\big\}.\end{gather*}
\end{Definition}

The positive numbers $p$ and $q$ in the previous def\/inition may have no connection at all. The order in which the elements in an enhanced subset of $S^1 \times S^m$ are displayed is not relevant, but the writing of the upcoming results will take into account the order adopted above and inherited from those in the subsets of $S^1$ and $S^m$ involved. An enhanced set as above contains $2pq$ distinct points.

The following lemma is concerned with the existence of enhanced sets.

\begin{Lemma} \label{key1Lemma} If $B'=\{(x_1',z_1'),(x_2',z_2'), \ldots, (x_n',z_n')\}$ is a subset of $S^1 \times S^m$, then there exists an enhanced subset $B$ of $S^1 \times S^m$ that contains~$B'$.
\end{Lemma}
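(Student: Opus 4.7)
The plan is to construct the two generating sets directly from the coordinates that already appear in $B'$. First I would take $\{x_1,\ldots,x_p\}$ to be the list of distinct first coordinates among $x_1',\ldots,x_n'\in S^1$. For the second coordinates a little more care is needed, since the distinct $z_j'$'s may fail to be antipodal-free. I would therefore partition the set of distinct elements of $\{z_1',\ldots,z_n'\}$ into orbits under the antipodal involution $z\mapsto -z$, each of size $1$ or $2$, and select one representative from each orbit. This yields a set $\{z_1,\ldots,z_q\}\subset S^m$ which is antipodal-free by construction and satisfies
\begin{gather*}
\{z_1',z_2',\ldots,z_n'\}\subset \{z_1,z_2,\ldots,z_q\}\cup \{-z_1,-z_2,\ldots,-z_q\}.
\end{gather*}

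Next I would let $B$ be the enhanced subset of $S^1\times S^m$ generated by $\{x_1,\ldots,x_p\}$ and $\{z_1,\ldots,z_q\}$ in the sense of the preceding definition. To verify $B'\subset B$, fix $k\in\{1,2,\ldots,n\}$: by construction $x_k'$ coincides with some $x_i$ and $z_k'$ coincides with either some $z_j$ or some $-z_j$, so $(x_k',z_k')$ is one of the $2pq$ points listed in $B$. In particular $B'\subset B$, which is exactly what the lemma requires.

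There is essentially no obstacle here, so the statement is really a bookkeeping observation: any finite subset of $S^1\times S^m$ can be embedded inside a ``rectangular grid'' whose $S^m$-factor has been symmetrised under the antipodal map. The only point worth double-checking is that $B$ has the required cardinality $2pq$, which follows from the antipodal-freeness of $\{z_1,\ldots,z_q\}$ together with the trivial fact that $z\neq -z$ on $S^m$: these two properties guarantee that $\{\pm z_1,\ldots,\pm z_q\}$ has $2q$ distinct elements, and pairing them with the $p$ distinct $x_i$'s produces $2pq$ distinct points of $S^1\times S^m$, matching the definition of an enhanced subset.
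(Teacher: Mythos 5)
Your proof is correct and follows the same approach as the paper: take the distinct first coordinates for the $S^1$-factor and an antipodal-free set whose symmetrisation under $z\mapsto -z$ covers the second coordinates. You are in fact slightly more careful than the paper on the antipodal point---the paper's proof asks for an antipodal-free set $\{z_1,\ldots,z_q\}$ with $z_i'\in\{z_1,\ldots,z_q\}$ for all $i$, which as literally written cannot be satisfied if some $z_i'$ is the antipode of another $z_j'$, whereas your orbit-representative construction correctly yields $\{z_1',\ldots,z_n'\}\subset\{z_1,\ldots,z_q\}\cup\{-z_1,\ldots,-z_q\}$, which is exactly what is needed for $B'\subset B$.
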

\begin{proof}
It suf\/f\/ices to consider the enhanced subset of $S^1 \times S^m$ generated by the set $\{x_1,x_2$, $\ldots, x_p\}$ that encompasses the distinct elements among the $x_i'$ and an antipodal free subset $\{z_1,z_2, \ldots, z_q\}$ of~$S^m$ satisfying $z_i' \in \{z_1,z_2, \ldots, z_q\}$, $i=1,2,\ldots,n$.
\end{proof}

If $f$ is the generating function of an isotropic and positive def\/inite kernel on $S^1 \times S^m$ and~$B$ is an enhanced set as previously described, we will write $\mathcal{E}(f,B)$ to denote the interpolation matrix of $((x,z), (y,w)) \in (S^1 \times S^m)^2 \to f(x \cdot y, z \cdot w)$ at $B$, keeping the order for the points of~$B$. If $A$ is a subset of $S^1 \times S^m$ and~$B$ is an enhanced subset of $S^1 \times S^m$ containing $A$, then the interpolation matrix of $((x,z), (y,w)) \in (S^1 \times S^m)^2 \to f(x \cdot y, z \cdot w)$ at $A$ is a principal sub-matrix of $\mathcal{E}(f,B)$. In particular, if $\mathcal{E}(f,B)$ is positive def\/inite, so is~$A$. These comments justify the following lemma.

\begin{Lemma}\label{augmentation}
Let $f$ be the generating function of an isotropic and positive definite kernel on~\mbox{$S^1 \times S^m$} and consider the alternative series representation~\eqref{alter} for~$f$. The following assertions are equivalent:
\begin{enumerate}\itemsep=0pt
\item[$(i)$] the kernel $((x,z), (y,w)) \in (S^1 \times S^m)^2 \to f(x \cdot y, z \cdot w)$ is strictly positive definite;
\item[$(ii)$] If $B$ is an enhanced subset of $S^1 \times S^m$, then the matrix $\mathcal{E}(f,B)$ is positive definite.
\end{enumerate}
\end{Lemma}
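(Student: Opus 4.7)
The plan is to prove the two implications separately, treating (i) $\Rightarrow$ (ii) as essentially immediate and (ii) $\Rightarrow$ (i) as a short embedding argument. For (i) $\Rightarrow$ (ii), I would first check that the $2pq$ points listed in the definition of an enhanced set $B$ are actually distinct: the $x_i$ are distinct in $S^1$, and the antipodal-free assumption on $\{z_1,\ldots,z_q\}$ together with the fact that $m\geq 2$ guarantees that $\{z_1,\ldots,z_q,-z_1,\ldots,-z_q\}$ is a set of $2q$ distinct points in $S^m$. Hence $B$ is a finite set of distinct points in $S^1\times S^m$, and strict positive definiteness of the kernel immediately forces $\mathcal{E}(f,B)$ to be positive definite.

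For the nontrivial direction (ii) $\Rightarrow$ (i), I would take an arbitrary finite set $A'=\{(x_1',z_1'),\ldots,(x_n',z_n')\}$ of distinct points in $S^1\times S^m$ and any nonzero real vector $c=(c_1,\ldots,c_n)$. By Lemma~\ref{key1Lemma}, I can choose an enhanced subset $B$ of $S^1\times S^m$ containing $A'$. The interpolation matrix of the kernel at $A'$ is then obtained from $\mathcal{E}(f,B)$ by selecting the rows and columns indexed by the points of $A'$, i.e., it is a principal submatrix of $\mathcal{E}(f,B)$. By hypothesis $\mathcal{E}(f,B)$ is positive definite, and principal submatrices of positive definite matrices are positive definite, so the interpolation matrix at $A'$ is positive definite. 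In particular $c^{t}[f(x_i'\cdot x_j',z_i'\cdot z_j')]_{i,j=1}^n c>0$, which proves strict positive definiteness since $A'$ and $c$ were arbitrary.

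Both halves of the argument are short, and honestly the only thing one might call an obstacle is bookkeeping: one must be precise that the points produced by the enhanced-set construction are genuinely distinct (which is where the antipodal-free condition on the $S^m$ factor is used) and that the original points appear as a sub-indexing of the enhanced set, so the interpolation matrix at $A'$ really is a principal submatrix rather than some more complicated compression. The substantive work of the section lies not here but in exploiting this reformulation: the value of Lemma~\ref{augmentation} is that it reduces the verification of strict positive definiteness to matrices with the block structure imposed by enhanced sets, which is the form the subsequent characterization will be set up to analyze.
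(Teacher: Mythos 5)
Your proof is correct and follows essentially the same route as the paper, which justifies the lemma by the same two observations: enhanced subsets are finite sets of distinct points (so (i) implies (ii) trivially), and by Lemma~\ref{key1Lemma} any finite subset embeds in an enhanced set, whence its interpolation matrix is a principal submatrix of $\mathcal{E}(f,B)$ and inherits positive definiteness. One small remark: the distinctness of $\{z_1,\ldots,z_q,-z_1,\ldots,-z_q\}$ follows from antipodal-freeness alone and does not actually use $m\geq 2$, which the paper imposes for other reasons.
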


Due to the decomposition for the generating function $f$ of an isotropic positive def\/inite kernel as described in~(\ref{alter}), we can write
\begin{gather*}
\mathcal{E}(f,B)=\sum_{l=0}^\infty \mathcal{E}(f,B,l)
\end{gather*}
in which $\mathcal{E}(f,B,l)$ is the interpolation matrix of the kernel $(t,s)\in S^1 \times S^m \to f_l(t)P_{l}^{m}(s)$ at~$B$. The order in which the elements of an enhanced subset of $S^1 \times S^m$ appears forces the matrix $\mathcal{E}(f,B,l)$ to have a very distinctive block representation. Precisely,
\begin{gather*}\mathcal{E}(f,B,l)=\left( \begin{matrix} M_{11} & M_{12} \\ M_{21} & M_{22} \end{matrix} \right),\end{gather*}
where each block $M_{\rho\sigma}=M_{\rho\sigma}(f,B,l)$ has its own block structure
\begin{gather*}M_{\rho\sigma}=[M_{\rho\sigma}^{\mu\nu}]_{\mu,\nu=1}^q,\qquad \rho, \sigma=1,2,\end{gather*}
def\/ined by
\begin{gather*}M_{\rho\sigma}^{\mu\nu}=[f_l(x_i \cdot x_j) ]_{i,j=1}^p(-1)^{l(\sigma+\rho)}P_l^m(z_\mu \cdot z_\nu), \qquad \mu,\nu=1,2,\ldots, q.\end{gather*}
Implicitly used in the writing of the block decomposition above is the fact that Gegenbauer polynomials of even degree are even functions while those of odd degree are odd functions. In particular, since $M_{22}=M_{11}$ and $M_{12}=M_{21}=(-1)^lM_{11}$, the matrix $\mathcal{E}(f,B,l)$ depends upon~$M_{11}$ only.

Keeping all the notation introduced so far, Lemmas~\ref{basic} and~\ref{augmentation} and the comments above lead to the following characterization for the strict positive def\/initeness of the kernel $((x,z), (y,w)) \in (S^1 \times S^m)^2 \to f(x \cdot y, z \cdot w)$ via
\begin{gather*}M_{11}=\big[ [f_l(x_i \cdot x_j) ]_{i,j=1}^p P_l^m(z_\mu \cdot z_\nu)\big]_{\mu,\nu=1}^q.\end{gather*}
\begin{Lemma}
Let $f$ be the generating function of an isotropic and positive definite kernel on~\mbox{$S^1 \times S^m$} and consider the alternative series representation~\eqref{alter} for~$f$. The following assertions are equivalent:
\begin{enumerate}\itemsep=0pt
\item[$(i)$] the kernel $((x,z), (y,w)) \in (S^1 \times S^m)^2 \to f(x \cdot y, z \cdot w)$ is strictly positive definite;
\item[$(ii)$] if $B$ is an enhanced subset of $S^1 \times S^m$ generated by a subset $\{x_1,x_2, \ldots, x_p\}$ of~$S^1$ and the antipodal free subset $\{z_1, z_2,\ldots, z_q\}$ of~$S^m$, then the only solution $(c_1, c_2) \in (\mathbb{R}^{pq})^2$ of the system
\begin{gather*}\big[c_1+(-1)^l c_2\big]^tM_{11}\big[c_1+(-1)^lc_2\big]=0,\qquad l\in \cup_{k}J_f^k,\end{gather*}
is the trivial one, that is, $c_1=c_2=0$.
\end{enumerate}
\end{Lemma}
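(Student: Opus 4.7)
The plan is to combine Lemma~\ref{augmentation}, Lemma~\ref{basic}, and the explicit block decomposition of $\mathcal{E}(f,B,l)$ worked out immediately above the statement. Lemma~\ref{augmentation} already reduces strict positive definiteness of the kernel to the positive definiteness of the interpolation matrix $\mathcal{E}(f,B)$ at every enhanced subset $B$ of $S^1\times S^m$, so the task is to translate the condition ``$\mathcal{E}(f,B)$ is positive definite for every enhanced $B$'' into the system displayed in~$(ii)$.

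First I would fix an enhanced subset $B$ generated by $\{x_1,\ldots,x_p\}\subset S^1$ and an antipodal free set $\{z_1,\ldots,z_q\}\subset S^m$, together with a vector $c=(c_1,c_2)\in\mathbb{R}^{pq}\times\mathbb{R}^{pq}$, and exploit the block form
\[
\mathcal{E}(f,B,l)=\begin{pmatrix} M_{11} & (-1)^l M_{11}\\ (-1)^l M_{11} & M_{11}\end{pmatrix}
\]
recorded above the statement (with $M_{11}=M_{11}(f,B,l)$) to obtain
\[
c^t\mathcal{E}(f,B,l)c=\bigl[c_1+(-1)^l c_2\bigr]^t M_{11}\bigl[c_1+(-1)^l c_2\bigr].
\]
Summing over $l$ and applying Lemma~\ref{basic} to the enumeration of the $2pq$ points of $B$ with coefficient vector $c$ then yields
\[
c^t\mathcal{E}(f,B)c=0 \iff \bigl[c_1+(-1)^l c_2\bigr]^t M_{11}(f,B,l)\bigl[c_1+(-1)^l c_2\bigr]=0,\quad l\in \cup_k J_f^k,
\]
since $M_{11}(f,B,l)=0$ whenever $l\notin\cup_k J_f^k$ (the associated $f_l$ vanishes) and each summand in $c^t\mathcal{E}(f,B)c=\sum_l c^t\mathcal{E}(f,B,l)c$ is nonnegative because $(t,s)\mapsto f_l(t)P_l^m(s)$ generates a positive definite kernel on $S^1\times S^m$.

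With that equivalence in hand, both implications are immediate. For $(i)\Rightarrow(ii)$: if the kernel is strictly positive definite and $(c_1,c_2)$ satisfies the system of~$(ii)$, the equivalence above gives $c^t\mathcal{E}(f,B)c=0$, and Lemma~\ref{augmentation} then forces $c_1=c_2=0$. For $(ii)\Rightarrow(i)$: assuming~$(ii)$, any relation $c^t\mathcal{E}(f,B)c=0$ at an enhanced $B$ forces the system in~$(ii)$, so $c_1=c_2=0$; hence $\mathcal{E}(f,B)$ is positive definite at every enhanced $B$, and Lemma~\ref{augmentation} produces~$(i)$.

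The only step requiring care is the identification $c^t\mathcal{E}(f,B,l)c=[c_1+(-1)^l c_2]^t M_{11}[c_1+(-1)^l c_2]$, which depends on the sign $(-1)^{l(\sigma+\rho)}$ in $M_{\rho\sigma}^{\mu\nu}$ and hence on the parity of the Gegenbauer polynomials recalled in the paragraph preceding the lemma. Once the block structure and this sign are correctly handled, the statement is essentially a rewriting of Lemma~\ref{augmentation}, so I do not anticipate any substantive obstacle beyond the bookkeeping of block indices.
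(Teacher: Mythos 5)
Your proof is correct and follows exactly the route the paper intends: the paper states this lemma without a detailed proof, remarking only that it follows from Lemmas~\ref{basic} and~\ref{augmentation} together with the block decomposition of $\mathcal{E}(f,B,l)$, and your argument fills in precisely that computation. The identity $c^{t}\mathcal{E}(f,B,l)c=[c_1+(-1)^{l}c_2]^{t}M_{11}[c_1+(-1)^{l}c_2]$ is verified correctly from the stated sign rule $M_{12}=M_{21}=(-1)^{l}M_{11}$, $M_{22}=M_{11}$, and the two implications via Lemma~\ref{augmentation} are handled as the authors evidently intended.
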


Introducing components for the vectors $c_i$ in the previous lemma, we obtain the following reformulation.

\begin{Lemma} \label{enhanced}
Let $f$ be as in the previous lemma. The following assertions are equivalent:
\begin{enumerate}\itemsep=0pt
\item[$(i)$] the kernel $((x,z), (y,w)) \in (S^1 \times S^m)^2 \to f(x \cdot y, z \cdot w)$ is strictly positive definite;
\item[$(ii)$] if $B$ is an enhanced subset of $S^1 \times S^m$ generated by a subset $\{x_1,x_2, \ldots, x_p\}$ of $S^1$ and the antipodal free subset $\{z_1, z_2,\ldots, z_q\}$ of $S^m$, then the only solution $(c_1^1, c_1^2, \ldots, c_1^q,c_2^1, c_2^2,$ $ \ldots, c_2^q) \in (\mathbb{R}^{p})^{2q}$ of the system
\begin{gather*}\sum_{\mu,\nu=1}^q \big\{(c_1^\mu+(-1)^l c_2^\mu)^t [f_l(x_i \cdot x_j) ]_{i,j=1}^p (c_1^\nu +(-1)^l c_2^\nu)\big\} P_l^m(z_\mu \cdot z_\nu)=0,\qquad l\in \cup_{k}J_f^k,
\end{gather*}
is the trivial one.
\end{enumerate}
\end{Lemma}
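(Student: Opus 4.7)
The plan is to derive the stated equivalence as a direct unfolding of the preceding lemma: the new statement is obtained simply by writing the quadratic form that appears there in terms of the natural block decomposition of $\mathbb{R}^{pq}$ as $q$ copies of $\mathbb{R}^p$, indexed by the antipodal free subset $\{z_1, z_2, \ldots, z_q\}$ of $S^m$.

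First, I would invoke the previous lemma, which already asserts that strict positive definiteness of the kernel on $S^1 \times S^m$ is equivalent to the following: for every enhanced subset $B$ of $S^1 \times S^m$ generated by $\{x_1, \ldots, x_p\} \subset S^1$ and the antipodal free set $\{z_1, \ldots, z_q\} \subset S^m$, the only pair $(c_1, c_2) \in (\mathbb{R}^{pq})^2$ satisfying
\begin{gather*}
\big[c_1 + (-1)^l c_2\big]^t M_{11} \big[c_1 + (-1)^l c_2\big] = 0, \qquad l \in \cup_k J_f^k,
\end{gather*}
is $c_1 = c_2 = 0$, where the $pq \times pq$ matrix $M_{11}$ has $(\mu,\nu)$-block $[f_l(x_i \cdot x_j)]_{i,j=1}^p \, P_l^m(z_\mu \cdot z_\nu)$, as recorded earlier in the paper.

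Second, I would write each $c_r \in \mathbb{R}^{pq}$ as a stacked vector $c_r = (c_r^1, c_r^2, \ldots, c_r^q)$ with $c_r^\mu \in \mathbb{R}^p$, for $r = 1,2$ and $\mu = 1, \ldots, q$. The condition $c_1 = c_2 = 0$ in $\mathbb{R}^{pq}$ is then equivalent to $c_1^\mu = c_2^\mu = 0$ in $\mathbb{R}^p$ for every $\mu$. Expanding the quadratic form $[c_1 + (-1)^l c_2]^t M_{11} [c_1 + (-1)^l c_2]$ by blocks, the scalar factor $P_l^m(z_\mu \cdot z_\nu)$ of each $(\mu,\nu)$-block pulls outside the inner $\mathbb{R}^p$-pairing against $[f_l(x_i \cdot x_j)]_{i,j=1}^p$, producing exactly
\begin{gather*}
\sum_{\mu,\nu=1}^q \big\{(c_1^\mu + (-1)^l c_2^\mu)^t [f_l(x_i \cdot x_j)]_{i,j=1}^p (c_1^\nu + (-1)^l c_2^\nu)\big\} P_l^m(z_\mu \cdot z_\nu),
\end{gather*}
which is the quantity appearing in assertion (ii) of the current lemma. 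Thus (i) and (ii) are the same condition written in two notations, and the proof is complete.

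There is no substantial obstacle here; the argument is purely bookkeeping. The only care needed is to keep the two levels of indexing straight: the outer indices $\mu,\nu$ correspond to the antipodal free factor $\{z_1, \ldots, z_q\} \subset S^m$ and carry the Gegenbauer scalars $P_l^m(z_\mu \cdot z_\nu)$, while the inner $p \times p$ matrix $[f_l(x_i \cdot x_j)]_{i,j=1}^p$ records the circle factor $\{x_1, \ldots, x_p\} \subset S^1$. Once this indexing is fixed, the equivalence drops out at once from the preceding lemma.
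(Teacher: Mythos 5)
Your proposal is correct and coincides with the paper's treatment: the paper itself introduces Lemma~\ref{enhanced} with the one-line remark that it is obtained by ``introducing components for the vectors $c_i$ in the previous lemma,'' which is precisely the block decomposition $c_r = (c_r^1, \ldots, c_r^q) \in (\mathbb{R}^p)^q$ that you carry out. No further review is needed.
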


Next, we break up the system in the previous lemma, according to the parity of the elements in $\cup_{k}J_f^k$.

\begin{Proposition} \label{evenodd}
Let $f$ be as in the previous lemma. The following assertions are equivalent:
\begin{enumerate}\itemsep=0pt
\item[$(i)$] the kernel $((x,z), (y,w)) \in (S^1 \times S^m)^2 \to f(x \cdot y, z \cdot w)$ is strictly positive definite;
\item[$(ii)$] if $p$ and $q$ are positive integers, $x_1,x_2, \ldots, x_p$ are distinct points on $S^1$ and $\{z_1, z_2, \ldots,$ $z_q\}$ is an antipodal free subset of $S^m$, then the only solution $(d_1^1, d_1^2,\ldots, d_1^q, d_2^1,d_2^2, \ldots, d_2^q)$ in~$(\mathbb{R}^{p})^{2q}$ of the system
\begin{gather*}
\sum_{\mu,\nu=1}^q \big\{(d_1^\mu)^t [f_l(x_i \cdot x_j)]_{i,j=1}^p d_1^\nu\big\} P_l^m(z_\mu \cdot z_\nu )=0, \qquad l\in (2\mathbb{Z}_++1) \cap \big(\cup_{k}J_f^k\big),\\
\sum_{\mu,\nu=1}^q \big\{(d_2^\mu)^t [f_l(x_i \cdot x_j)]_{i,j=1}^p d_2^\nu\big\} P_l^m(z_\mu \cdot z_\nu )=0, \qquad l\in (2\mathbb{Z}_+) \cap \big(\cup_{k}J_f^k\big),
\end{gather*}
is the trivial one.
\end{enumerate}
\end{Proposition}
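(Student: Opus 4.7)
The plan is to obtain Proposition~\ref{evenodd} as a direct reformulation of Lemma~\ref{enhanced}, via a parity split of the index $l$ combined with a linear change of variables on $(\mathbb{R}^p)^{2q}$. Nothing new about the kernel itself needs to be proved; the work lies entirely in identifying the two systems in (ii) of the proposition with the single system in (ii) of Lemma~\ref{enhanced}.

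First I would fix an arbitrary enhanced subset $B$ generated by $\{x_1,\dots,x_p\}\subset S^1$ and an antipodal-free $\{z_1,\dots,z_q\}\subset S^m$, and read the system in Lemma~\ref{enhanced}(ii) one equation at a time. The only appearance of the parity of $l$ in that system is through the scalar $(-1)^l$ in the vector $c_1^\mu+(-1)^l c_2^\mu$. Introducing
\begin{gather*}
d_1^\mu:=c_1^\mu-c_2^\mu,\qquad d_2^\mu:=c_1^\mu+c_2^\mu,\qquad \mu=1,2,\dots,q,
\end{gather*}
the vector $c_1^\mu+(-1)^l c_2^\mu$ equals $d_2^\mu$ when $l$ is even and equals $d_1^\mu$ when $l$ is odd. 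Substituting this into the quadratic form indexed by $l$ in Lemma~\ref{enhanced}(ii) produces, for $l\in(2\mathbb{Z}_+)\cap(\cup_k J_f^k)$, exactly the second equation in Proposition~\ref{evenodd}(ii), and for $l\in(2\mathbb{Z}_++1)\cap(\cup_k J_f^k)$, exactly the first.

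Next I would observe that the linear map $(c_1,c_2)\mapsto(d_1,d_2)$ on $(\mathbb{R}^p)^{2q}$ is a bijection, with inverse $c_1^\mu=(d_1^\mu+d_2^\mu)/2$, $c_2^\mu=(d_2^\mu-d_1^\mu)/2$. Consequently $(c_1,c_2)=0$ if and only if $(d_1,d_2)=0$. Moreover, after the substitution the resulting system decouples: the odd-$l$ equations involve only $d_1$ and the even-$l$ equations involve only $d_2$. This decoupling justifies writing the two conditions independently as in the statement of the proposition. Putting these facts together gives (i)$\Leftrightarrow$(ii) through the chain (i)$\Leftrightarrow$Lemma~\ref{enhanced}(ii)$\Leftrightarrow$Proposition~\ref{evenodd}(ii).

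I expect no genuine obstacle beyond bookkeeping; the one place where care is needed is to verify that the parity-based rewriting uses \emph{all} the information in Lemma~\ref{enhanced}(ii) and introduces no extra constraint, i.e., that the two separated systems in (ii) of the proposition, taken together and transported back via the change of variables, are logically equivalent to the single system indexed by $l\in\cup_k J_f^k$. This is immediate once one notes that each index $l$ contributes to exactly one of the two subsystems according to its parity, and that the bijection $(c_1,c_2)\leftrightarrow(d_1,d_2)$ preserves triviality.
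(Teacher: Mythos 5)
Your proof is correct and takes essentially the same approach as the paper: both reduce Proposition~\ref{evenodd} to Lemma~\ref{enhanced} via the parity split of $l$ and the change of variables $d_1^\mu=c_1^\mu-c_2^\mu$, $d_2^\mu=c_1^\mu+c_2^\mu$. Your treatment is in fact slightly more uniform, since invoking bijectivity of this linear map handles both directions at once and avoids the paper's case distinction ($c_1^\mu=-c_2^\mu=d_1^\mu/2$ versus $c_1^\mu=c_2^\mu=d_2^\mu/2$) in the direction from a nontrivial $(d_1,d_2)$ back to a nontrivial $(c_1,c_2)$.
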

\begin{proof}
Assume that for some distinct points $x_1, x_2, \ldots, x_p$ in $S^1$ and some antipodal free subset $\{z_1, z_2, \ldots, z_q\}$ of~$S^m$, the system in $(ii)$ has a~nontrivial solution $(d_1^1, d_1^2,\ldots, d_1^q, d_2^1,$ $d_2^2, \ldots, d_2^q)$. If $d_1^\mu\neq 0$ for some $\mu$, we def\/ine $c_1^\mu=-c_2^\mu=2^{-1}d_1^\mu$, $\mu=1,2,\ldots, q$. Otherwise, we def\/ine $c_1^\mu=c_2^\mu=2^{-1}d_2^\mu$, $\mu=1,2,\ldots, q$. In both cases, the vector $(c_1^1, c_1^2, \ldots, c_1^q,c_2^1, c_2^2, \ldots, c_2^q)$ is nonzero and, in addition,
\begin{gather*}\sum_{\mu,\nu=1}^q \big\{\big(c_1^\mu+(-1)^l c_2^\mu\big)^t [f_l(x_i \cdot x_j) ]_{i,j=1}^p \big(c_1^\nu +(-1)^l c_2^\nu\big)\big\} P_l^m(z_\mu \cdot z_\nu)=0,\end{gather*}
for $l \in [(2\mathbb{Z}_++1) \cap (\cup_{k}J_f^k)] \cup [(2\mathbb{Z}_+) \cap (\cup_{k}J_f^k)]= \cup_{k}J_F^k$. In other words, Condition $(ii)$ in Lemma~\ref{enhanced} does not hold for the enhanced set $B$ generated by the subset $\{x_1,x_2, \ldots, x_p\}$ of $S^1$ and the antipodal free subset $\{z_1, z_2,\ldots, z_q\}$ of $S^m$. Thus, $((x,z), (y,w)) \in (S^1 \times S^m)^2 \to f(x \cdot y, z \cdot w)$ is not strictly positive def\/inite. Conversely, if $(i)$ does not hold, the pre\-vious lemma assures the existence of a subset $\{x_1,x_2, \ldots, x_p\}$ of~$S^1$, an antipodal free subset $\{z_1, z_2,\ldots, z_q\}$ of $S^m$, an enhanced subset $A$ of $S^1 \times S^m$ generated by them and a nonzero vector $(c_1^1, c_1^2, \ldots, c_1^q,c_2^1, c_2^2, \ldots, c_2^q) \in (\mathbb{R}^{p})^{2q}$ so that
\begin{gather*}
\sum_{\mu,\nu=1}^q \big\{\big(c_1^\mu+(-1)^l c_2^\mu\big)^t [f_l(x_i \cdot x_j) ]_{i,j=1}^p \big(c_1^\nu +(-1)^l c_2^\nu\big)\big\} P_l^m(z_\mu \cdot z_\nu)=0,\qquad l\in \cup_{k}J_f^k.
\end{gather*}
However, this last piece of information corresponds to
\begin{gather*}
\sum_{\mu,\nu=1}^q \big\{\big(c_1^\mu -c_2^\mu\big)^t [f_l(x_i \cdot x_j) ]_{i,j=1}^p \big(c_1^\nu -c_2^\nu\big)\big\} P_l^m(z_\mu \cdot z_\nu )=0, \qquad l\in (2\mathbb{Z}_++1) \cap \big(\cup_{k}J_f^k\big),
\end{gather*}
and
\begin{gather*}
\sum_{\mu,\nu=1}^q \big\{\big(c_1^\mu +c_2^\mu\big)^t [f_l(x_i \cdot x_j)]_{i,j=1}^p\big(c_1^\nu +c_2^\nu\big)\big\} P_l^m(z_\mu \cdot z_\nu )=0, \qquad l\in (2\mathbb{Z}_+) \cap \big(\cup_{k}J_f^k\big).
\end{gather*}
On the other hand, it is easily verif\/iable that the vector
\begin{gather*}\big(c_1^1-c_2^1, c_1^2-c_2^2,\ldots, c_1^q-c_2^q,c_1^1+c_2^1, c_1^2+c_2^2,\ldots, c_1^q+c_2^q\big) \in \big(\mathbb{R}^{p}\big)^{2q}\end{gather*}
is nonzero. Thus, $(ii)$ does not hold due to Lemma~\ref{enhanced}.
\end{proof}

We are about ready to prove the crucial result in this section.

\begin{Theorem} \label{key}
Let $f$ be the generating function of an isotropic and positive definite kernel on~\mbox{$S^1 \times S^m$} and consider the alternative series representation~\eqref{alter} for $f$. The following assertions are equivalent:
\begin{enumerate}\itemsep=0pt
\item[$(i)$] the kernel $((x,z), (y,w)) \in (S^1 \times S^m)^2 \to f(x \cdot y, z \cdot w)$ is strictly positive definite;
\item[$(ii)$] if $p$ is a positive integer, $x_1, x_2, \ldots, x_p$ are distinct points in $S^1$ and $c\in \mathbb{R}^p\setminus\{0\}$, then the set
\begin{gather*}\big\{l\in \cup_{k}J_f^k \colon c^{t} [f_l(x_i \cdot x_j)]_{i,j=1}^p c>0\big\}\end{gather*}
contains infinitely many even and infinitely many odd integers.
\end{enumerate}
\end{Theorem}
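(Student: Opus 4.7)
The plan is to prove the two implications separately. For $(i)\Rightarrow(ii)$, I would apply Lemma~\ref{prelim} to the given $p$, distinct points $x_1,\ldots,x_p\in S^1$, and nonzero $c\in\mathbb{R}^p$ to produce a continuous, isotropic and strictly positive definite kernel on $S^m$ whose generating function is
\begin{gather*}
g(s)=\sum_{l\in \cup_{k}J_f^k} \bigl\{c^t[f_l(x_i\cdot x_j)]_{i,j=1}^p c\bigr\}P_l^m(s).
\end{gather*}
Each matrix $F_l:=[f_l(x_i\cdot x_j)]_{i,j=1}^p$ is positive semidefinite because $f_l$ itself generates an isotropic positive definite kernel on $S^1$, so the coefficients $c^tF_lc$ are all nonnegative. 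Invoking the characterization of strict positive definiteness on $S^m$ for $m\geq 2$ recalled in the Introduction then forces the set $\{l\in\cup_{k}J_f^k : c^tF_lc>0\}$ to contain infinitely many even and infinitely many odd integers, which is exactly~$(ii)$.

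For $(ii)\Rightarrow(i)$ I would argue by contraposition through Proposition~\ref{evenodd}. If the kernel were not strictly positive definite, the proposition would supply distinct points $\{x_1,\ldots,x_p\}\subset S^1$, an antipodal-free subset $\{z_1,\ldots,z_q\}\subset S^m$ and a nontrivial vector $(d_1^1,\ldots,d_1^q,d_2^1,\ldots,d_2^q)\in(\mathbb{R}^p)^{2q}$ solving the split system. I would assume without loss of generality that $d_1^{\mu_0}\neq 0$ for some $\mu_0$ (the case when only the even block is nonzero being treated symmetrically). Introducing $D:=[d_1^1\ d_1^2\cdots d_1^q]\in\mathbb{R}^{p\times q}$, $A_l:=D^tF_lD$ and $B_l:=[P_l^m(z_\mu\cdot z_\nu)]_{\mu,\nu=1}^q$, both matrices are positive semidefinite, and the odd portion of the split system collapses to $\mathrm{tr}(A_lB_l)=0$ for every odd $l\in\cup_{k}J_f^k$. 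Since the trace of a product of positive semidefinite matrices vanishes only when the product itself vanishes, this forces $A_lB_l=0$.

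The crucial input I would need is the following auxiliary fact, specific to $m\geq 2$: if $\{z_1,\ldots,z_q\}$ is an antipodal-free subset of $S^m$, then there exists $l_0$ such that $B_l$ is positive definite for every $l\geq l_0$. This rests on the classical decay estimate $P_l^m(t)/P_l^m(1)\to 0$ as $l\to\infty$, uniformly on compact subsets of $(-1,1)$; antipodal-freeness together with distinctness yields $|z_\mu\cdot z_\nu|\leq 1-\delta<1$ for $\mu\neq\nu$, and a compactness argument on a hypothetical sequence of unit vectors $v^{(n)}\in\ker B_{l_n}$ with $l_n\to\infty$ produces, after isolating the diagonal and applying the decay to the off-diagonal terms, the impossible identity
\begin{gather*}
1=\lim_n\|v^{(n)}\|^2=\lim_n\frac{(v^{(n)})^tB_{l_n}v^{(n)}}{P_{l_n}^m(1)}=0.
\end{gather*}

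With this lemma in hand, $A_l=0$ for every odd $l\geq l_0$ in $\cup_{k}J_f^k$; because $F_l$ is positive semidefinite, $D^tF_lD=0$ forces $F_lD=0$, and in particular $(d_1^{\mu_0})^tF_l d_1^{\mu_0}=0$ for all such $l$. Consequently the set $\{l\in\cup_{k}J_f^k : (d_1^{\mu_0})^tF_ld_1^{\mu_0}>0\}$ contains at most finitely many odd integers, contradicting hypothesis~$(ii)$ applied to the nonzero vector $c=d_1^{\mu_0}$. The main obstacle is the Gegenbauer-decay lemma ensuring eventual positive definiteness of $B_l$; once that is in place the rest of the argument is the trace-of-product identity for positive semidefinite matrices and a small amount of bookkeeping.
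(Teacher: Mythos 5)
Your proposal is correct and follows the paper's overall architecture for both directions, but the way you close the argument in $(ii)\Rightarrow(i)$ is genuinely different from what the paper does, and it is worth comparing. The paper, after reducing to Proposition~\ref{evenodd}, normalizes: for each large even $l$ in a suitable infinite set $Q$ it selects an index $\theta=\theta(l)$ maximizing $(d_2^\mu)^t[f_l(x_i\cdot x_j)]_{i,j=1}^p d_2^\mu$ over $\mu$, divides the whole identity by $(d_2^\theta)^t[f_l(x_i\cdot x_j)]_{i,j=1}^p d_2^\theta\,P_l^m(1)$, bounds the cross terms via Cauchy--Schwarz, and then invokes the Szeg\H{o} decay $P_l^m(t)/P_l^m(1)\to 0$ for $|t|<1$ to obtain $0\geq 1/2$. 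You instead package the split system as $\operatorname{tr}(A_lB_l)=0$ with $A_l=D^t[f_l(x_i\cdot x_j)]_{i,j=1}^p D$ and $B_l=[P_l^m(z_\mu\cdot z_\nu)]_{\mu,\nu=1}^q$ both positive semidefinite, conclude $A_lB_l=0$, and then use the same Gegenbauer decay but one step earlier, to establish that $B_l/P_l^m(1)\to I$ and hence $B_l$ is positive definite for all $l\geq l_0$; this forces $A_l=0$, then $[f_l(x_i\cdot x_j)]_{i,j=1}^p D=0$, and finally contradicts $(ii)$ with $c=d_1^{\mu_0}$. Both arguments rest on the same analytic input (the off-diagonal decay of normalized Gegenbauer polynomials, which needs the antipodal-free assumption so that $|z_\mu\cdot z_\nu|\leq 1-\delta$ for $\mu\neq\nu$), but your version isolates a cleaner reusable lemma (eventual positive definiteness of the Gegenbauer Gram matrix) and actually proves the stronger vanishing $A_l=0$ for large $l$ rather than merely reaching a numerical contradiction; the paper's normalization trick is shorter to write but requires some care with the $l$-dependent index $\theta(l)$ and the bookkeeping of the limit. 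One small simplification you could make: the compactness argument with kernel vectors $v^{(n)}$ is unnecessary, since the operator-norm convergence $\|B_l/P_l^m(1)-I\|\to 0$ already gives positive definiteness of $B_l$ for large $l$ directly; and the trace argument can be shortened by noting that $\operatorname{tr}(A_lB_l)=\sum_i\lambda_i(B_l)\,u_i^tA_lu_i$ over an eigenbasis $\{u_i\}$ of $B_l$, so once all $\lambda_i(B_l)>0$ and $A_l\succeq 0$, vanishing of the trace forces $A_l=0$ without passing through $A_lB_l=0$.
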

\begin{proof} Lemma~\ref{prelim} justif\/ies one implication. As for the other, assume the condition in the statement of the theorem holds but
$((x,z), (y,w)) \in (S^1 \times S^m)^2 \to f(x \cdot y, z \cdot w)$ is not strictly positive def\/inite. Hence, we can f\/ind a positive integer $p$, distinct points $x_1, x_2, \ldots, x_p$ in $S^1$, an antipodal free subset $\{y_1, y_2, \ldots, y_n\}$ of $S^m$ and a~nonzero vector $(d_1^1, d_1^2,\ldots, d_1^q, d_2^1, d_2^2, \ldots, d_2^q)$ in $(\mathbb{R}^{p})^{2q}$ so that the two equations in Proposition~\ref{evenodd}(ii) hold. We will proceed assuming that $(d_2^1, d_2^2, \ldots, d_2^q)$ is a nonzero vector and that
\begin{gather*}\sum_{\mu,\nu=1}^q \big\{(d_2^\mu)^t [f_l(x_i \cdot x_j)]_{i,j=1}^p d_2^\nu \big\}P_l^m(z_\mu \cdot z_\nu )=0, \qquad l\in 2\mathbb{Z}_+ \cap \big(\cup_{k}J_f^k\big)\end{gather*}
and will reach a contradiction. The other possibility can be handled similarly but the details will be omitted. Without loss of generality, we can assume that the vector $d_2^1$ is nonzero. Since the set
\begin{gather*}\big\{l\in \cup_{k}J_f^k \colon \big(d_2^1\big)^t [f_l(x_i \cdot x_j)]_{i,j=1}^p d_2^1>0\big\}\cap 2\mathbb{Z} \end{gather*}
is inf\/inite by assumption, we can select an inf\/inite subset $Q$ of it and a number $\theta=\theta(l)$ in $\{1,2,\ldots, q\}$ so that
\begin{gather*}\big(d_2^\theta\big)^t [f_l(x_i \cdot x_j)]_{i,j=1}^p d_2^\theta \geq \big(d_2^\mu\big)^t [f_l(x_i \cdot x_j)]_{i,j=1}^p d_2^\mu, \qquad \mu=1,2,\ldots,q,\qquad l \in Q.\end{gather*}
In particular,
\begin{gather*}\big(d_2^\theta\big)^t [f_l(x_i \cdot x_j)]_{i,j=1}^p d_2^\theta >0, \qquad l \in Q.\end{gather*}
Returning to the initial equality we can write
\begin{gather*}
0 = 1+\sum_{\substack{\mu=1 \\ \mu \neq \theta}}^q \frac{(d_2^\mu)^t [f_l(x_i \cdot x_j)]_{i,j=1}^p d_2^\mu}{(d_2^\theta)^t [f_l(x_i \cdot x_j)]_{i,j=1}^pd_2^\theta}\frac{P_l^m(z_\mu\cdot z_\mu)}{P_l^m(1)}\\
\hphantom{0 =1}{}+\sum_{\mu\neq \nu} \frac{(d_2^\mu)^t [f_l(x_i \cdot x_j)]_{i,j=1}^p d_2^\nu}{(d_2^\theta)^t [f_l(x_i \cdot x_j)]_{i,j=1}^p d_2^\theta}\frac{P_l^m(z_\mu\cdot z_\nu)}{P_l^m(1)},\qquad l \in Q.
\end{gather*}
Since each $f_l$ is the continuous and isotropic part of a positive def\/inite kernel on $S^1$, we have that $(d_2^\mu)^t [f_l(x_i \cdot x_j)]_{i,j=1}^p d_2^\mu\geq 0$, $\mu=1,2,\ldots, q$. In particular,
\begin{gather*}
\sum_{\substack{\mu=1 \\ \mu \neq \theta}}^q \frac{(d_2^\mu)^t [f_l(x_i \cdot x_j)]_{i,j=1}^p d_2^\mu}{(d_2^\theta)^t [f_l(x_i \cdot x_j)]_{i,j=1}^p d_2^\theta} \in [0,q-1],
\end{gather*}
while the Cauchy--Schwarz inequality implies that
\begin{gather*}
0\leq \left|\frac{(d_2^\mu)^t [f_l(x_i \cdot x_j)]_{i,j=1}^p d_2^\nu}{(d_2^\theta)^t [f_l(x_i \cdot x_j)]_{i,j=1}^p d_2^\theta}\right|\leq 1, \qquad \mu \neq \nu.
\end{gather*}
Since $z_\mu \cdot z_\nu \in (-1,1)$, $\mu \neq \nu$, a well-known property of the Gegenbauer polynomials provides the limit formula~\cite[p.~196]{szego}
\begin{gather*}\lim_{\substack{l\to \infty \\ l\in Q}} \frac{P_l^m(z_\mu\cdot z_\nu)}{P_l^m(1)}=0, \qquad \mu \neq \nu.\end{gather*}
Consequently, we may apply the def\/inition of limit conveniently ($l$ large enough), to conclude that $0\geq 1+0-1/2=1/2$, a contradiction.
\end{proof}

The next theorem demands the truncated sum functions ($\gamma\geq 0$)
\begin{gather*}f_\gamma^{\rm o}=\sum_{2l+1\geq \gamma} f_{2l+1} \qquad \mbox{and} \qquad f_\gamma^{\rm e}= \sum_{2l \geq \gamma} f_{2l}\end{gather*}
attached to the generating function of an isotropic and positive def\/inite kernel. Since $P_l^m(1)\geq 1$, $m\geq 2$, $l\geq 0$, it follows that
\begin{gather*}
|f_l(t)| \leq \sum_{k=0}^\infty a_{k,l}^{1,m}P_k^1(1)\leq \sum_{k=0}^\infty a_{k,l}^{1,m}P_k^1(1)P_l^m(1),\qquad l\geq \gamma.
\end{gather*}
In particular, since $\sum\limits_{l=0}^\infty \sum\limits_{k=0}^\infty a_{k,l}^{1,m}P_k^1(1)P_l^m(1) <\infty$, the functions $f_\gamma^{\rm o}$ and $f_\gamma^{\rm e}$ are continuous.

\begin{Theorem}
Let $f$ be the generating function of an isotropic and positive definite kernel on~\mbox{$S^1 \times S^m$} and consider the alternative series representation~\eqref{alter} for~$f$. The following assertions are equivalent:
\begin{enumerate}\itemsep=0pt
\item[$(i)$] the kernel $((x,z), (y,w)) \in (S^1 \times S^m)^2 \to f(x \cdot y, z \cdot w)$ is strictly positive definite;
\item[$(ii)$] for each $\gamma \geq 0$, the functions~$f_\gamma^{\rm o}$ and~$f_{\gamma}^{\rm e}$ are the generating functions of isotropic and strictly positive definite kernels on~$S^1$.
\end{enumerate}
\end{Theorem}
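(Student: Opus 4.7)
The plan is to reduce everything to Theorem~\ref{key}, using the elementary observation that each $f_l$ generates a continuous, isotropic and positive def\/inite kernel on $S^1$, so the quadratic forms $c^t [f_l(x_i \cdot x_j)]_{i,j=1}^p c$ are all nonnegative. In particular, since the series defining $f_\gamma^{\rm o}$ and $f_\gamma^{\rm e}$ converge uniformly (by the estimate displayed just before the theorem statement), we may interchange quadratic form and summation to obtain
\begin{gather*}
c^t \big[f_\gamma^{\rm o}(x_i \cdot x_j)\big]_{i,j=1}^p c = \sum_{2l+1 \geq \gamma} c^t [f_{2l+1}(x_i \cdot x_j)]_{i,j=1}^p c,
\end{gather*}
and similarly for $f_\gamma^{\rm e}$. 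Each summand on the right is nonnegative, so the whole sum is strictly positive if and only if at least one summand is strictly positive.

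For $(i) \Rightarrow (ii)$, I would fix $\gamma \geq 0$, distinct points $x_1,\ldots, x_p \in S^1$ and $c \in \mathbb{R}^p \setminus \{0\}$. By Theorem~\ref{key}, the set $\{l\in \cup_k J_f^k \colon c^t [f_l(x_i \cdot x_j)]_{i,j=1}^p c>0\}$ contains infinitely many odd integers, hence at least one odd integer $l \geq \gamma$. That term alone is strictly positive in the displayed identity above, so $c^t [f_\gamma^{\rm o}(x_i \cdot x_j)] c >0$, proving that $f_\gamma^{\rm o}$ generates a strictly positive def\/inite kernel on $S^1$ (invoking the classical fact that strict positive def\/initeness of an isotropic kernel on $S^1$ is equivalent to strict positivity of the corresponding quadratic forms). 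The same argument with ``odd'' replaced by ``even'' handles $f_\gamma^{\rm e}$.

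For $(ii) \Rightarrow (i)$, I would argue by contradiction against the criterion in Theorem~\ref{key}. If (i) fails, there exist $x_1,\ldots,x_p \in S^1$ and $c \in \mathbb{R}^p \setminus \{0\}$ such that the set $S_c := \{l\in \cup_k J_f^k \colon c^t [f_l(x_i \cdot x_j)]_{i,j=1}^p c>0\}$ lacks either infinitely many odd or infinitely many even integers. Say $S_c$ contains only finitely many odd integers, so that I can pick $\gamma$ exceeding all of them. Then every odd $l \geq \gamma$ contributes zero to the sum displayed above, giving $c^t [f_\gamma^{\rm o}(x_i \cdot x_j)]_{i,j=1}^p c = 0$, which contradicts the strict positive def\/initeness of $f_\gamma^{\rm o}$ on $S^1$. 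The even case is symmetric.

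The main obstacle, such as it is, is simply book-keeping: making sure the uniform convergence of $\sum f_l(t) P_l^m(s)$ translates into the interchange of sum and quadratic form (which is immediate for finite $p$), and making sure the characterization of strict positive def\/initeness on $S^1$ via strict positivity of all interpolation quadratic forms is legitimately invoked. Given Theorem~\ref{key}, the conceptual content is just the translation ``infinitely many odd (resp.\ even) indices in $S_c$'' $\Longleftrightarrow$ ``for every $\gamma$ there is an odd (resp.\ even) $l \geq \gamma$ in $S_c$,'' combined with the nonnegativity of the individual quadratic forms.
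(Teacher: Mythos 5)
Your proposal is correct and takes essentially the same approach as the paper: both directions reduce the statement to Theorem~\ref{key} via the observation that each $c^t[f_l(x_i\cdot x_j)]_{i,j=1}^p c$ is nonnegative, so the truncated sums $c^t[f_\gamma^{\rm o}(x_i\cdot x_j)]_{i,j=1}^p c$ and $c^t[f_\gamma^{\rm e}(x_i\cdot x_j)]_{i,j=1}^p c$ are positive exactly when some odd (resp.\ even) index $\geq\gamma$ gives a positive term. The only cosmetic difference is that for $(i)\Rightarrow(ii)$ the paper cites Lemma~\ref{prelim} where you cite Theorem~\ref{key}; since Lemma~\ref{prelim} is precisely what supplies that direction of Theorem~\ref{key}, the two invocations carry the same content.
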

\begin{proof} If $(i)$ holds, we can apply Lemma \ref{prelim} in order to see that
\begin{gather*}c^t f_\gamma^{\rm o}(x_i \cdot x_j)c =\sum_{2l+1\geq \gamma} c^t f_{2l+1}(x_i \cdot x_j)c >0,\end{gather*}
whenever $c \in \mathbb{R}^p\setminus\{0\}$, $\gamma \geq 0$ and $x_1, x_2, \ldots, x_p$ are distinct points in~$S^1$. Obviously, a similar property is valid for
$f_\gamma^{\rm e}$. Thus, $(ii)$ follows. Conversely, if $(ii)$ holds, then Condition $(ii)$ in the previous theorem holds as well, due to the fact that $(ii)$ is valid for all $\gamma\geq 0$. Thus, Theo\-rem~\ref{key} guarantees the strict positive def\/initeness of $((x,z), (y,w)) \in (S^1 \times S^m)^2 \to f(x \cdot y, z \cdot w)$.
\end{proof}

The coef\/f\/icient in the series expansion of $f_\gamma^{\rm o}$ attached to the polynomial $P_k^1$ is
\begin{gather*}\sum_{\gamma \leq 2l+1 \in J_f^k} a_{k,2l+1}^{1,m}.\end{gather*}
It is positive if, and only if, $J_f^k$ contains an odd integer greater than or equal to $\gamma$. A similar remark applies to the coef\/f\/icients in the series of $f_\gamma^{\rm e}$. Taking into account the characterization for strict positive def\/initeness on~$S^1$ quoted at the introduction, we have the following consequence of the previous theorem and our f\/inal characterization for strict positive def\/initeness on $S^1 \times S^m$.

\begin{Theorem}
Let $f$ be the generating function of an isotropic and positive definite kernel on~\mbox{$S^1 \times S^m$}. The following assertions are equivalent:
\begin{enumerate}\itemsep=0pt
\item[$(i)$] the kernel $((x,z), (y,w)) \in (S^1 \times S^m)^2 \to f(x \cdot y, z \cdot w)$ is strictly positive definite;
\item[$(ii)$] for each $\gamma \geq 0$, the sets
\begin{gather*}\big\{k\in \mathbb{Z}\colon J_f^{|k|} \cap \{\gamma,\gamma+1,\ldots\}\cap (2\mathbb{Z}+1)\neq \varnothing\big\}\end{gather*}
and
\begin{gather*}\big\{k\in \mathbb{Z}\colon J_f^{|k|} \cap \{\gamma,\gamma+1,\ldots\}\cap 2\mathbb{Z}_+\neq \varnothing\big\} \end{gather*}
intersect every arithmetic progression in $\mathbb{Z}$.
\end{enumerate}
\end{Theorem}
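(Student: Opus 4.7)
The plan is to derive this final characterization as a direct corollary of the preceding theorem by applying the known strict positive definiteness characterization on $S^1$ (the $m=1$ case quoted in the introduction, due to Chen et al.\ and Menegatto et al.) to the truncated sum functions $f_\gamma^{\rm o}$ and $f_\gamma^{\rm e}$, and then translating the resulting conditions on the Schoenberg coefficients of $f_\gamma^{\rm o}, f_\gamma^{\rm e}$ back into combinatorial conditions on the sets $J_f^k$.

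First I would invoke the preceding theorem to rewrite $(i)$ as: for each $\gamma \geq 0$, both $f_\gamma^{\rm o}$ and $f_\gamma^{\rm e}$ generate continuous, isotropic, strictly positive definite kernels on $S^1$. The observation that these truncated functions are indeed continuous generators of positive definite kernels on $S^1$ is exactly what the uniform estimate preceding the theorem statement establishes, so nothing is lost at this step.

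Next I would apply the $m=1$ characterization from the introduction: a continuous isotropic positive definite kernel on $S^1$ with Schoenberg expansion $g=\sum_{k\geq 0} b_k P_k^1$ is strictly positive definite if and only if $\{k \in \mathbb{Z}\colon b_{|k|}>0\}$ meets every full arithmetic progression in~$\mathbb{Z}$. To close the argument, I would use the expansion $f_l(t)=\sum_{k\geq 0}a_{k,l}^{1,m}P_k^1(t)$ together with the remark made immediately before the theorem statement: the coefficient of $P_k^1$ in $f_\gamma^{\rm o}$ equals $\sum_{\gamma\leq 2l+1,\,(k,2l+1)\in J_f}a_{k,2l+1}^{1,m}$, which is positive exactly when $J_f^k \cap \{\gamma,\gamma+1,\ldots\} \cap (2\mathbb{Z}+1) \neq \varnothing$; the analogous statement with even integers holds for $f_\gamma^{\rm e}$. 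The two sets thus produced are precisely those appearing in $(ii)$, and assembling the chain of equivalences completes the proof.

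I do not expect any serious obstacle here, as all the analytic and linear-algebraic work has already been carried out in Theorem~\ref{key} and the preceding theorem; this final result is essentially a bookkeeping step converting an analytic condition on $f_\gamma^{\rm o}, f_\gamma^{\rm e}$ into a combinatorial condition on the index sets $J_f^{|k|}$. The only point requiring care is that the $S^1$ characterization uses the symmetrized support on $\mathbb{Z}$, which is why the sets in $(ii)$ are expressed in terms of $J_f^{|k|}$ with $k \in \mathbb{Z}$ rather than $J_f^k$ with $k \in \mathbb{Z}_+$.
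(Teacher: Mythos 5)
Your proposal is correct and matches the paper's own reasoning: the authors also derive this theorem as a direct consequence of the preceding theorem on $f_\gamma^{\rm o}$ and $f_\gamma^{\rm e}$, combined with the $S^1$ characterization from the introduction and the observation that the Schoenberg coefficient of $P_k^1$ in $f_\gamma^{\rm o}$ (resp.~$f_\gamma^{\rm e}$) is positive exactly when $J_f^k$ contains an odd (resp.~even) integer $\geq\gamma$.
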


\section[Replacing $S^m$ with a compact two-point homogeneous space]{Replacing $\boldsymbol{S^m}$ with a compact two-point homogeneous space}\label{section4}

The results obtained so far in the paper can be adapted to hold for kernels on a product of the form $S^1 \times \mathbb{M}^d$, in which $\mathbb{M}^d$ is a compact two-point homogeneous space. The case $S^1 \times S^{1}$ was covered in~\cite{jean1} while the case~$S^d\times\mathbb{M}^d$, $d\geq 3$, was covered in \cite[Theorem~4.5]{barbosa1}. The results sketched in this section complement these two cases.

Let us write $|zw|$ to denote the usual normalized surface distance between~$z$ and~$w$ in~$\mathbb{M}^d$. As described in~\cite{barbosa1}, an isotropic kernel $((x,z), (y,w)) \in (S^1 \times \mathbb{M}^d)^2 \to f(x \cdot y, \cos (|zw|/2))$ is positive def\/inite if, and only if, the generating function $f$ has a double series representation in the form
\begin{gather*}
f(t,s) = \sum_{k,l=0}^\infty a_{k,l}^dP_k^{1}(t)P_l^{d,\beta}(s), \qquad t,s \in [-1,1]^2,
\end{gather*}
in which $a_{k,l}^d\geq 0$, $k,l\in \mathbb{Z}_+$, $P_l^{d,\beta}$ is the Jacobi polynomial associated to the pair $((d-2)/2,\beta)$, $\beta$ is a number from the list $-1/2, 0, 1, 3$, depending on the respective category $\mathbb{M}^d$ belongs to, that is, the real projective spaces $\mathbb{P}^d(\mathbb{R})$, $d=2,3,\ldots$, the complex projective spaces $\mathbb{P}^d(\mathbb{C})$, $d=4,6,\ldots$, the quaternionic projective spaces $\mathbb{P}^d(\mathbb{H})$, $d=8,12,\ldots$, and the Cayley projective plane $\mathbb{P}^{d}({\rm Cay})$, $d=16$, and $\sum\limits_{k,l=0}^\infty a_{k,l}P_k^{1}(1)P_l^{d,\beta}(1)<\infty$.

In this setting, the procedure adopted in Section~\ref{section3} can be considerably simplif\/ied. An alternative series representation for the generating function of the kernel can be likewise def\/ined and the fact that a point in $\mathbb{M}^d$ possesses inf\/initely many antipodal points permits the deduction of a version of Theorem \ref{key} without considering any enhancements and augmentations. Precisely, we have the following result.

\begin{Theorem} \label{key1}
Let $f$ be the generating function of an isotropic and positive definite kernel on~\mbox{$S^1 \times \mathbb{M}^d$} and consider the alternative series representation for~$f$. The following assertions are equivalent:
\begin{enumerate}\itemsep=0pt
\item[$(i)$] the kernel $((x,z), (y,w)) \in (S^1 \times \mathbb{M}^d)^2 \to f(x \cdot y, z \cdot w)$ is strictly positive definite;
\item[$(ii)$] if $p$ is a positive integer, $x_1, x_2, \ldots, x_p$ are distinct points in $S^1$ and $c\in \mathbb{R}^p\setminus\{0\}$, then the set
\begin{gather*}
\big\{l\in \cup_{k}J_f^k \colon c^{t} [f_l(x_i \cdot x_j) ]_{i,j=1}^p c>0\big\}
\end{gather*}
contains infinitely many integers.
\end{enumerate}
\end{Theorem}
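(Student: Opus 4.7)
The plan is to mirror the proof of Theorem~\ref{key} but drop the whole enhancement/augmentation apparatus of Section~\ref{section3}. That apparatus was forced on us by the parity identity $P_l^m(-s)=(-1)^l P_l^m(s)$ of the Gegenbauer polynomials, which couples a point of $S^m$ with its (unique) antipode. The Jacobi polynomials $P_l^{d,\beta}$ with $\beta \in \{-1/2,0,1,3\}$ do not carry any such parity, and on $\mathbb{M}^d$ the set of points at maximal distance from a given point is a positive-dimensional submanifold rather than a singleton, so there is no rigid pairing to exploit and no splitting into an even and an odd subcase is required.

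For the implication $(i)\Rightarrow(ii)$, I would first record the evident analog of Lemma~\ref{prelim}: given distinct points $x_1,\ldots,x_p$ on $S^1$ and $c \in \mathbb{R}^p \setminus \{0\}$, the function
\begin{gather*}
g(s) = \sum_{l \in \cup_{k} J_f^k} \bigl\{ c^t[f_l(x_i \cdot x_j)]_{i,j=1}^p c \bigr\}\, P_l^{d,\beta}(s), \qquad s \in [-1,1],
\end{gather*}
is the generating function of a continuous, isotropic and positive definite kernel on $\mathbb{M}^d$ that is strictly positive definite whenever the original kernel on $S^1 \times \mathbb{M}^d$ is. The proof copies that of Lemma~\ref{prelim} almost verbatim. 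If the set appearing in $(ii)$ were finite, then $g$ would be a finite linear combination of the zonal functions $s \mapsto P_l^{d,\beta}(s)$, and the associated reproducing space would be finite-dimensional; hence $g$ could not be strictly positive definite on $\mathbb{M}^d$, contradicting the characterization of strict positive definiteness on compact two-point homogeneous spaces from~\cite{barbosa}.

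For $(ii)\Rightarrow(i)$, I would argue by contradiction. Assuming the kernel is not strictly positive definite, the $\mathbb{M}^d$-analog of Lemma~\ref{basic} yields distinct points $(x_1,z_1),\ldots,(x_n,z_n)$ in $S^1 \times \mathbb{M}^d$ and nonzero real scalars $c_1,\ldots,c_n$ with
\begin{gather*}
\sum_{i,j=1}^n c_i c_j f_l(x_i \cdot x_j)\, P_l^{d,\beta}\bigl(\cos(|z_i z_j|/2)\bigr) = 0, \qquad l \in \cup_{k} J_f^k.
\end{gather*}
Listing the distinct $x$-values as $x_1,\ldots,x_p$ and the distinct $z$-values as $z^{(1)},\ldots,z^{(q)}$, and assembling, exactly as in the proof of Theorem~\ref{key}, vectors $d^\mu \in \mathbb{R}^p$ with at least one $d^\mu$ nonzero (say $d^1$), the identity becomes
\begin{gather*}
\sum_{\mu,\nu=1}^q (d^\mu)^t [f_l(x_a \cdot x_b)]_{a,b=1}^p d^\nu \cdot P_l^{d,\beta}\bigl(\cos(|z^{(\mu)} z^{(\nu)}|/2)\bigr) = 0, \qquad l \in \cup_{k} J_f^k.
\end{gather*}

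By $(ii)$ the set $Q := \{ l \in \cup_{k} J_f^k : (d^1)^t [f_l(x_a \cdot x_b)] d^1 > 0 \}$ is infinite. For each $l \in Q$ choose $\theta(l) \in \{1,\ldots,q\}$ maximizing $(d^\theta)^t [f_l] d^\theta$ and divide the previous identity by the positive quantity $(d^{\theta(l)})^t [f_l] d^{\theta(l)} \cdot P_l^{d,\beta}(1)$. As in the closing lines of the proof of Theorem~\ref{key}, the diagonal contribution is bounded below by $1$, the off-diagonal coefficients are bounded in modulus by $1$ thanks to the Cauchy--Schwarz inequality applied to the positive semidefinite matrix $[f_l(x_a \cdot x_b)]$, and the off-diagonal Jacobi ratios $P_l^{d,\beta}(\cos(|z^{(\mu)} z^{(\nu)}|/2))/P_l^{d,\beta}(1)$ tend to $0$ along $l \to \infty$ in $Q$ by the classical limit formula for Jacobi polynomials at points of $(-1,1)$. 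The one step requiring care is the verification that $\cos(|z^{(\mu)} z^{(\nu)}|/2) \in (-1,1)$ whenever $z^{(\mu)} \neq z^{(\nu)}$; this is where the geometry of the compact two-point homogeneous space enters, and it follows from the standard normalization of the geodesic distance on $\mathbb{M}^d$ (the diameter being at most $\pi$). With this in hand, taking $l \in Q$ sufficiently large yields $0 \geq 1 - 1/2 > 0$, the desired contradiction.
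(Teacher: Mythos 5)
Your proposal is correct and fills in, essentially verbatim, the proof that the paper only sketches. The authors justify Theorem~\ref{key1} merely by remarking that the procedure from Section~\ref{section3} simplifies because points of $\mathbb{M}^d$ (when it is not a sphere) have a positive-dimensional set of antipodes, so the enhancement/augmentation device is unnecessary; you have spelled out the precise reason on the analytic side, namely that the relevant argument $\cos(|zw|/2)$ lies in $[0,1]$ and hence never reaches $-1$, so the Jacobi ratio $P_l^{d,\beta}(\cos(|z^{(\mu)}z^{(\nu)}|/2))/P_l^{d,\beta}(1)$ tends to $0$ for every pair $\mu\neq\nu$ with no antipodal-free hypothesis needed, and no even/odd splitting in the style of Proposition~\ref{evenodd} is required since the Jacobi polynomials lack the parity of the Gegenbauer ones. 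Both directions of your argument track the paper's template faithfully: the forward implication is the $\mathbb{M}^d$ analog of Lemma~\ref{prelim} combined with the characterization of strict positive definiteness from~\cite{barbosa} (your finite-dimensional reproducing-space aside is an equally valid, slightly more elementary way to phrase the same obstruction), and the reverse implication regroups coefficients over the grid of distinct $x$- and $z$-values with zeros filling the missing slots and then runs the limiting argument from the proof of Theorem~\ref{key}, with the maximizing index $\theta(l)$, the nonnegativity of the remaining diagonal ratios, and the Cauchy--Schwarz bound on off-diagonal ratios all carrying over unchanged.

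One cosmetic caveat: where you write ``nonzero real scalars $c_1,\ldots,c_n$'' you of course mean ``real scalars not all zero''; the subsequent line, which says ``with at least one $d^\mu$ nonzero,'' shows you are using it correctly.
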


Taking into account the characterization for strict positive def\/initeness obtained in \cite{barbosa}, the f\/inal characterization in these remaining cases is this one.

\begin{Theorem}
Let $f$ be the generating function of an isotropic and positive definite kernel on~\mbox{$S^1 \times \mathbb{M}^d$}. Assume $\mathbb{M}^d$ is not a~sphere. The following assertions are equivalent:
\begin{enumerate}\itemsep=0pt
\item[$(i)$] the kernel $((x,z), (y,w)) \in (S^1 \times \mathbb{M}^d)^2 \to f(x \cdot y, z \cdot w)$ is strictly positive definite;
\item[$(ii)$] for each $\gamma \geq 0$, the set
\begin{gather*}
\big\{k\in \mathbb{Z}\colon \ J_f^{|k|} \cap \{\gamma,\gamma+1,\ldots\}\neq \varnothing\big\}
\end{gather*}
intersects every arithmetic progression in $\mathbb{Z}$.
\end{enumerate}
\end{Theorem}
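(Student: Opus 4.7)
The plan is to mirror the reductions that culminated in the final characterization on $S^1\times S^m$ in Section~\ref{section3}, using Theorem~\ref{key1} in place of Theorem~\ref{key}. Since $\mathbb{M}^d$ is not a sphere, Theorem~\ref{key1} features a single infinitude condition with no parity split (and no enhancements or augmentations), so a single family of truncated functions will suffice, where the sphere case required the pair $f_\gamma^{\rm o}$, $f_\gamma^{\rm e}$.

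For each $\gamma\geq 0$, I would introduce the truncated function
\[
f_\gamma := \sum_{l\geq \gamma} f_l,
\]
where $f_l(t):=\sum_k a_{k,l}^d P_k^1(t)$ is obtained by collecting the coefficients of $P_l^{d,\beta}(s)$ in the double series for~$f$. The bound $|f_l(t)|\leq \sum_k a_{k,l}^d P_k^1(1) P_l^{d,\beta}(1)$ together with $\sum_{k,l}a_{k,l}^d P_k^1(1) P_l^{d,\beta}(1)<\infty$ (and $P_l^{d,\beta}(1)\geq 1$) shows that $f_\gamma$ is continuous on $[-1,1]$ and generates an isotropic positive definite kernel on~$S^1$.

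The central reduction is that the kernel on $S^1\times \mathbb{M}^d$ is strictly positive definite if and only if $f_\gamma$ generates a strictly positive definite kernel on~$S^1$ for every $\gamma\geq 0$. For the forward direction, Theorem~\ref{key1}$(ii)$ furnishes, for each $c\in \mathbb{R}^p\setminus\{0\}$ and distinct $x_1,\dots,x_p\in S^1$, infinitely many $l\in \cup_{k} J_f^k$ with $c^t [f_l(x_i\cdot x_j)]c>0$; since each remaining term $c^t [f_l(x_i\cdot x_j)]c$ is nonnegative (each $f_l$ generating a positive definite kernel on~$S^1$), the tail $c^t [f_\gamma(x_i\cdot x_j)]c$ still receives infinitely many positive contributions and is strictly positive. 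For the converse, were Theorem~\ref{key1}$(ii)$ to fail, some choice of~$c$ and $x_i$'s would make the set of qualifying~$l$ finite; choosing $\gamma$ larger than its maximum would then force $c^t[f_\gamma(x_i\cdot x_j)]c=0$, contradicting the strict positive definiteness of~$f_\gamma$.

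Finally, I would apply the classical Chen--Menegatto--Sun characterization on~$S^1$ (quoted in the Introduction) to each~$f_\gamma$: $f_\gamma$ is strictly positive definite iff the set of indices $k\in\mathbb{Z}$ with positive Schoenberg coefficient intersects every full arithmetic progression in~$\mathbb{Z}$. Reading off that coefficient, one finds it equals $\sum_{\gamma\leq l\in J_f^{|k|}}a_{|k|,l}^d$, which is positive precisely when $J_f^{|k|}\cap\{\gamma,\gamma+1,\dots\}\neq\varnothing$. Combining the two equivalences delivers the statement. The only non-bookkeeping step is the central reduction above; everything else is a routine translation, since Theorem~\ref{key1} has already absorbed the main difficulty of the problem.
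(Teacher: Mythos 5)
Your proof is correct and follows the route the paper clearly intends, namely transplanting the Section~\ref{section3} chain of reductions to the non-spherical case, with Theorem~\ref{key1} replacing Theorem~\ref{key} and a single truncation family $f_\gamma$ replacing the parity pair $f_\gamma^{\rm o}$, $f_\gamma^{\rm e}$. The paper itself leaves this final argument implicit (it only remarks that the result follows from Theorem~\ref{key1} together with the characterization from the cited reference), so your write-up usefully supplies the intermediate equivalence---``the kernel on $S^1\times\mathbb{M}^d$ is SPD iff every $f_\gamma$ is SPD on $S^1$''---and both directions of that equivalence are argued soundly: the forward direction via the infinitude guaranteed by Theorem~\ref{key1}$(ii)$ and nonnegativity of each $c^t[f_l(x_i\cdot x_j)]c$, and the converse by choosing $\gamma$ past the maximum of a hypothetically finite qualifying set. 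The final translation of the Schoenberg coefficient of $f_\gamma$ into the condition $J_f^{|k|}\cap\{\gamma,\gamma+1,\dots\}\neq\varnothing$ and the appeal to the circle characterization are exactly as one would want.
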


\subsection*{Acknowledgements}

The authors are grateful to the referees for their valuable comments and suggestions. Second author acknowledges partial f\/inancial support from FAPESP, grant 2014/00277-5. Likewise, the third author acknowledges support from the same foundation, under grants 2014/25796-5 and 2016/03015-7.

\pdfbookmark[1]{References}{ref}
\LastPageEnding

\end{document}